\def\d{\mbox{d}}
\def\ep{\varepsilon}
\def\p{\partial}
\def\Om{\Omega}
\def\R{\mathbb{R}}
\def\N{\mathbb{N}}
\def\divgce{\mbox{div}}
\newtheorem{theo}{Theorem}[section]
\newtheorem{prop}[theo]{Proposition}
\newtheorem{lemma}[theo]{Lemma}
\title{What is the optimal shape of a pipe?}
\author{Antoine Henrot\footnote{corresponding author}\\ Institut \'Elie Cartan, UMR 7502,
Nancy Universit\'e - CNRS - INRIA\\
B.P. 239 54506 Vandoeuvre les Nancy Cedex,  France\\
email: henrot@iecn.u-nancy.fr \and Yannick Privat\\ Institut \'Elie
Cartan, UMR 7502,
Nancy Universit\'e - CNRS - INRIA\\
B.P. 239 54506 Vandoeuvre les Nancy Cedex,  France\\
email: Yannick.Privat@iecn.u-nancy.fr}
\begin{document}

\maketitle

\vspace*{0.4cm} {\bf Abstract.} We consider an incompressible fluid
in a three-dimensional pipe, following the Navier-Stokes system with
classical boundary conditions. We are interested in the following
question: is there any optimal shape for the criterion "energy
dissipated by the fluid"? Moreover, is the cylinder the optimal
shape? We prove that there exists an optimal shape in a reasonable
class of admissible domains, but the cylinder is not optimal. For
that purpose, we explicit the first order optimality condition,
thanks to adjoint state and we prove that it is impossible that the
adjoint state be a solution of this over-determined system when the
domain is the cylinder. At last, we show some numerical simulations
for that problem. \vspace*{0.3cm}

{\bf Keywords:} \parbox[t]{11cm}{shape optimization, Navier-Stokes, symmetry}\\

{\bf AMS classification:} primary: 49Q10, secondary: 49J20, 49K20, 35Q30, 76D05, 76D55\\

\section{Introduction}
The shape optimization problems in fluid mechanics are very
important and gave rise to many works. Most often, these works have
a numerical character due to the intrinsic difficulty of the
Navier-Stokes equations. For a first bibliography on the topic, we
refer e.g. to \cite{fei}, \cite{HP}, \cite{mo-pi}, \cite{Pi}
\cite{PlSo}.

In this work, we are interested in one of the simplest problem: what
shape must have a pipe in order to minimize the energy  dissipated
by a fluid? For us, a pipe (of "length" $L$) will be a three
dimensional domain $\Om$ contained in the strip $\left\{
(x_1,x_2,x_3) \;, 0< x_3< L\right\}$. We will assume that the inlet
$E:=\p\Om \cap \{x_3=0\}$ (where $\p\Om$ denotes the boundary of
$\Om$) and the outlet $S:=\p\Om \cap \{x_3=L\}$ are two fixed
identical discs and that the volume of $\Om$ is imposed. The unknown
(or free) part of the boundary of $\Omega$ will be denoted by
$\Gamma$ (so $\p\Om=E\cup\Gamma\cup S$).

In the pipe $\Om$, we consider the flow of a viscous, incompressible
fluid with a velocity $\mathbf{u}$ and a pressure $p$ satisfying the
Navier-Stokes system. We assume that the velocity profile
$\mathbf{u_0}$ at the inlet $E$ is of parabolic type; on the lateral
boundary $\Gamma$, we assume no-slip condition $\mathbf{u}=0$ and we
control the outlet by imposing an "outlet-pressure" condition on
$S$. We will assume that the viscosity $\mu$ is large enough in
order that the solution of the system is unique (see \cite{temam}).
The criterion that we want to minimize, with respect to the shape
$\Om$, is the energy dissipated by the fluid (or viscosity energy)
defined by $ J(\Omega ):=2\mu \int_\Omega |\varepsilon
(\mathbf{u})|^2\d x$ where $\varepsilon$ is the stretching tensor.

We will first prove an existence Theorem. To obtain this result, we
work in the class of admissible domains which satisfy an $\ep$-cone
property (see \cite{Che1}, \cite{HP}). Then, we are interested in
symmetry properties of the optimal domain. For the Stokes model, we
are only able to prove that the optimum has one plane of symmetry.
It is not completely clear to see whether the optimum should be
axially symmetric. In a series of papers \cite{AP}, \cite{PA}, G.
Arumugam and O. Pironneau  proved for a similar, but much simpler
problem that one has to build riblets on the lateral boundary to
reduce the drag. Nevertheless, it is a natural question to ask
whether the cylinder should be the optimum for our problem. We will
show that it is not the case. For that purpose, we explicit the
first order optimality condition. This condition can be easily
expressed in term of the adjoint state and gives an over-determined
condition on the lateral boundary $\Gamma$. Then, we prove that it
is impossible that the adjoint state be a solution of this
over-determined system when the domain is the cylinder.

This paper is organized as follows. At section \ref{sec2}, we state
the shape optimization problem, we prove existence and symmetry.
Section \ref{sec3} is devoted to the proof of the main Theorem. We
give in section \ref{sec4} some numerical results and concluding
remarks.

These results have been announced in the Note \cite{HPri}.

\section{The shape optimization problem}\label{sec2}
Let us give the notations used in this paper. We consider a generic
three dimensional domain $\Om$ contained in a compact set
$$D:=\left\{ (x_1,x_2,x_3) \;, x_1^2+x_2^2\leq R_0^2 \;,0\leq x_3\leq
L\right\}$$
where $R_0$ and $L$ are two positive constants. We will
denote by $\p\Om$ the boundary of $\Om$. In the sequel, we will
assume that the inlet $E$ of $\Om$ defined by $E:=\p\Om \cap
\{x_3=0\}$  and the outlet $S$ defined by $S:=\p\Om \cap \{x_3=L\}$
are two fixed identical discs of radius $R<R_0$ centered on the
$x_3$ axis. We will also assume that the volume of all the domains
$\Om$ is imposed, say $|\Om|=V=\pi R^2L$. We decompose the boundary
of $\Om$ as the disjoint union $\p\Om=E\cup\Gamma\cup S$ and
$\Gamma$, the lateral boundary is the main unknown or the shape we
want to design.

Let us now precise the state equation. We consider the flow of a
viscous incompressible fluid into $\Om$. We denote by
$\mathbf{u}=(u_1,u_2,u_3)$ (letters in bold will correspond to
vectors) its velocity and by $p$ its pressure. As usual in fluid
mechanics, we introduce $\varepsilon$ the stretching tensor defined
by:
$$
\varepsilon (\mathbf{u})=\left(\frac{1}{2}\left(\frac{\partial
u_i}{\partial x_j}+\frac{\partial u_j}{\partial
x_i}\right)\right)_{1\leq i,j \leq 3}.
$$
We will consider the Navier-Stokes system (except for Theorem
\ref{theosym} where the Stokes system will be considered). As
boundary conditions, we assume that the velocity profile
$\mathbf{u_0}$ at the inlet $E=\{x_3=0\}$ is of parabolic type; on
the lateral boundary $\Gamma$, we assume adherence or no-slip
condition $\mathbf{u}=0$ and we control the outlet by imposing an
"outlet-pressure" condition on $S=\{x_3=L\}$. Therefore, the p.d.e.
system satisfied by the velocity and the pressure is:
\begin{equation}
\label{NSinit} \left\{\begin{array}{ll} \displaystyle -\mu \triangle
\mathbf{u}+\nabla p+\nabla
\mathbf{u}\cdot \mathbf{u}=0 & \mathbf{x}\in \Omega , \\
\displaystyle \divgce\, \mathbf{u}=0 & \mathbf{x}\in \Omega \\
\mathbf{u}=\mathbf{u}_0:=\left(0,0,c(x_1^2+x_2^2-R^2)\right) & \mathbf{x}\in E \\
\mathbf{u}=0 & \mathbf{x}\in \Gamma \\
-p\mathbf{n}+2\mu \varepsilon (\mathbf{u})\cdot
\mathbf{n}=\mathbf{h}:=\left(2\mu cx_1,2\mu c x_2,-p_1\right) &
\mathbf{x}\in S.
\end{array}
\right.
\end{equation}
where $\mu>0$ denotes the viscosity of the fluid, $\mathbf{n}$ the
exterior unit normal vector (on $S$ we have $\mathbf{n}=(0,0,1)$).
At last, the constant $c$ which appears in the boundary condition on
$E$ and $S$ is assumed to be negative. The sign of $c$ can
physically be explained. Indeed, in the case where $\Om$ is a
cylinder, the flow is driven by a Poiseuille law (simplified
physical law derived from the Navier-Stokes system which describes a
slow viscous incompressible flow through a constant circular
section). Then , this constant $c$ can be written $c=\displaystyle
\frac{p_1-p_0}{4\mu L}$, where $p_1$ denotes the constant value of
the pressure at the outlet $S$ while $p_0$ is the constant value of
the pressure at the inlet $E$.

This choice of the boundary condition ensures that the solution of
(\ref{NSinit}) will be given by a parabolic profile when $\Om$ is a
cylinder. More precisely, if $\Om$ is the cylinder of radius $R$ and
height $L$, the solution of (\ref{NSinit}) is explicitly given by:
\begin{equation}\label{expli}
\left\{\begin{array}{c}
         \mathbf{u}(x_1,x_2,x_3)=\left(0,0,c(x_1^2+x_2^2-R^2)\right) \\
         p(x_1,x_2,x_3)=4\mu c (x_3-L)+p_1\,.
       \end{array}\right.
\end{equation}
More generally, if $\Om$ is a regular domain, we have a classical
existence and uniqueness result for such systems, see e.g.
\cite{boyer}, \cite{temam}.
\begin{theo}
Let us assume that $\mathbf{u_0}$ belongs to the Sobolev space
$(H^{3/2}(E))^3$ and $\mathbf{h}\in (H^{1/2}(S))^3$. If the
viscosity $\mu$ is large enough, the problem (\ref{NSinit}) has a
unique solution $(\mathbf{u},p)\in H^1(\Omega )\times L^2(\Omega )$.
\end{theo}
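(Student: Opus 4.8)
The plan is to establish existence and uniqueness for this mixed boundary-value Navier-Stokes problem by the standard route: first reformulate as a homogeneous problem, then solve the associated Stokes system via a variational (Lax-Milgram) argument on the correct function space, and finally handle the nonlinear convection term by a fixed-point or contraction argument that exploits the smallness (equivalently, the largeness of $\mu$).

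Let me sketch the approach. Let me think about what the right function space is. Because we impose Dirichlet data on $E \cup \Gamma$ but a Neumann-type (stress/pressure) condition on $S$, the natural space for test functions is
\begin{equation*}
V := \left\{ \mathbf{v} \in (H^1(\Omega))^3 \;:\; \divgce\, \mathbf{v}=0,\ \mathbf{v}=0 \text{ on } E\cup\Gamma \right\},
\end{equation*}
where the constraint is imposed only on the Dirichlet portion of the boundary, leaving $S$ free. First I would lift the inhomogeneous inlet data: choose $\mathbf{U}_0 \in (H^2(\Omega))^3$ with $\divgce\,\mathbf{U}_0=0$, equal to $\mathbf{u}_0$ on $E$ and $0$ on $\Gamma$ (such a divergence-free extension exists since $\mathbf{u}_0\in (H^{3/2}(E))^3$ and the flux is compatible), and set $\mathbf{w}=\mathbf{u}-\mathbf{U}_0$ so that $\mathbf{w}\in V$ solves a problem with homogeneous Dirichlet data. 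Multiplying the momentum equation by a test function $\mathbf{v}\in V$, integrating by parts, and using the boundary condition on $S$ to convert the stress term $-p\mathbf{n}+2\mu\varepsilon(\mathbf{u})\cdot\mathbf{n}$ into the prescribed $\mathbf{h}$, the pressure drops out of the formulation on $V$ (its gradient is $L^2$-orthogonal to divergence-free fields vanishing on the Dirichlet boundary, and the boundary integral on $S$ reproduces $\mathbf{h}$). This yields the weak form: find $\mathbf{w}\in V$ such that
\begin{equation*}
2\mu\int_\Omega \varepsilon(\mathbf{w}):\varepsilon(\mathbf{v})\,\d x + b(\mathbf{w}+\mathbf{U}_0,\mathbf{w}+\mathbf{U}_0,\mathbf{v}) = \langle F,\mathbf{v}\rangle \quad \forall \mathbf{v}\in V,
\end{equation*}
where $b$ is the trilinear convection form and $F$ collects the $\mathbf{h}$-boundary term and the data contributions from $\mathbf{U}_0$.

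For the linear (Stokes) part I would invoke coercivity of the symmetric-gradient bilinear form $a(\mathbf{w},\mathbf{v})=2\mu\int_\Omega \varepsilon(\mathbf{w}):\varepsilon(\mathbf{v})\,\d x$ on $V$; here the key ingredient is a Korn inequality valid on $V$, which holds because functions in $V$ vanish on the portion $E\cup\Gamma$ of positive measure, preventing rigid-body motions. Continuity of $a$ and of the linear functional $F$ (using the trace theorem on $S$ for the $\mathbf{h}$-term) is routine, so Lax-Milgram gives a unique Stokes solution. Recovering the pressure $p\in L^2(\Omega)$ from the momentum equation is then standard via the de Rham / inf-sup argument adapted to mixed boundary conditions. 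The nonlinear problem is treated by a fixed-point map $T:\mathbf{z}\mapsto\mathbf{w}$, where $\mathbf{w}$ solves the Stokes problem with convection linearized at $\mathbf{z}$; using the continuity estimate of $b$ (controlled through the Sobolev embedding $H^1\hookrightarrow L^4$ in dimension three) one shows $T$ maps a ball of $V$ into itself and is a contraction \emph{provided $\mu$ is large enough relative to the data}, which is exactly the hypothesis made. Banach's fixed-point theorem then delivers existence and uniqueness.

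The main obstacle, and the point I would treat most carefully, is the mixed boundary condition. The Neumann/stress condition on $S$ together with the Dirichlet condition on $E\cup\Gamma$ means this is not the textbook fully-Dirichlet Navier-Stokes setup, so I must verify that (i) a divergence-free lifting of the inlet data exists without needing zero net flux through $S$ (the free outlet absorbs the flux), (ii) Korn's inequality and hence coercivity genuinely hold on $V$ with the constraint imposed only on the Dirichlet part, and (iii) the trilinear form retains the antisymmetry/bound needed for the contraction estimate despite the nonvanishing boundary term on $S$ — in particular the usual identity $b(\mathbf{z},\mathbf{v},\mathbf{v})=\tfrac12\int_S(\mathbf{z}\cdot\mathbf{n})|\mathbf{v}|^2$ now carries a boundary contribution on $S$ that must be controlled, which again is where the largeness of $\mu$ (forcing the velocity to stay small) becomes essential. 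Once these three points are secured, the remaining estimates are standard and I would simply cite \cite{boyer,temam} for them.
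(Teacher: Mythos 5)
Your outline is correct and is essentially the standard argument: the paper itself gives no proof of this theorem, simply citing \cite{boyer} and \cite{temam}, and your route (divergence-free lifting of the inlet data, variational formulation on divergence-free fields vanishing on $E\cup\Gamma$, Korn plus Poincar\'e for coercivity, Lax--Milgram and de Rham for the Stokes part, then a contraction argument for the convection term under the large-$\mu$ hypothesis) is exactly the scheme the authors themselves deploy for the analogous adjoint problem in Proposition \ref{ExistenceRegulariteAdjoint}. You also correctly isolate the one genuinely delicate point of the mixed Dirichlet/outlet-pressure setting, namely the uncontrolled sign of the boundary term $\tfrac12\int_S(\mathbf{u}\cdot\mathbf{n})|\mathbf{v}|^2$ arising from the trilinear form, which is precisely why the largeness of $\mu$ is assumed; nothing essential is missing.
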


The criterion we want to minimize is the energy dissipated by the
fluid (or viscosity energy) defined by:
\begin{equation}\label{crit}
    J(\Omega ):=2\mu \int_\Omega |\varepsilon (\mathbf{u})|^2\d x,
\end{equation}
where $\varepsilon$ is the stretching tensor :
$$
\varepsilon (\mathbf{u})=\left(\frac{1}{2}\left(\frac{\partial
u_i}{\partial x_j}+\frac{\partial u_j}{\partial
x_i}\right)\right)_{1\leq i,j \leq 3}\,.
$$
To make the statement precise, we also need to define the class of
admissible domains or shapes. We will consider a first general
class:
\begin{equation}\label{class1}
\begin{array}{c}
  \mathcal{O}_{V} \stackrel{\textnormal{déf}}{=} \left\{\Omega
\textnormal{ bounded and simply connected domain in } \R ^3 : \right.\\
 \left. |\Omega|=V, \ \Pi_0\cap \overline{\Omega}=E, \
\Pi_L\cap \overline{\Omega}=S,\right\}
\end{array}
\end{equation}
where $\Pi_0$ and $\Pi_L$ denote respectively the planes $\{x_3=0\}$
and $\{x_3=L\}$.

To prove an existence result, we need to restrict the class of
admissible domains. It is a very classical feature in shape
optimization, since these problems are often ill-posed, see
\cite{allaire}, \cite{HP}. We adopt here the choice made by D.
Chenais in \cite{Che1} which consists in assuming some kind of
uniform regularity. More precisely, we will consider domains which
satisfy an uniform cone condition, we say that these domains have
the $\ep$-cone property, we refer to \cite{Che1}, \cite{DZ} or
\cite{HP} for the precise definition. So, we define the class
\begin{equation}\label{class}
    \mathcal{O}_{V}^\ep := \left\{\Omega \in
\mathcal{O}_{V} : \Om \textnormal{ has the $\ep$-cone
property}\right\}
\end{equation}
\begin{lemma}\label{OepsFerme}
The class $\mathcal{O}_{V}^\varepsilon$ is closed for the Hausdorff
distance.
\end{lemma}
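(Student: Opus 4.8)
The plan is to take an arbitrary sequence $(\Om_n)_{n\in\N}$ in $\mathcal{O}_V^\ep$ converging in the Hausdorff sense to some set $\Om$, and to verify that $\Om$ satisfies each of the constraints defining $\mathcal{O}_V^\ep$. Since every $\Om_n$ is contained in the fixed compact set $D$, the Hausdorff limit $\Om$ is contained in $D$ as well, so boundedness comes for free; the work lies in recovering the $\ep$-cone property, the volume constraint, the inlet/outlet conditions, and the topological constraints.

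The cornerstone of the argument is the theory developed by D. Chenais (see \cite{Che1}, and also \cite{DZ}, \cite{HP}). First I would invoke the result asserting that the uniform $\ep$-cone property is \emph{stable under Hausdorff convergence}: if each $\Om_n$ enjoys the $\ep$-cone property for a common $\ep$, then the limit $\Om$ is open and enjoys the $\ep$-cone property for the same $\ep$. This immediately disposes of the regularity constraint and guarantees that $\Om$ is a genuine (open) domain rather than a degenerate limit set. The second ingredient I would use is the equivalence, \emph{valid precisely for families satisfying a uniform cone condition}, between Hausdorff convergence of the domains $\overline{\Om_n}\to\overline{\Om}$, Hausdorff convergence of the boundaries $\p\Om_n\to\p\Om$, and $L^1$-convergence of the characteristic functions $\chi_{\Om_n}\to\chi_\Om$.

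From the $L^1$-convergence of the characteristic functions I would conclude $|\Om|=\lim_n|\Om_n|=V$, so the volume constraint passes to the limit; this is the step where the cone property is genuinely indispensable, since bare Hausdorff convergence does not preserve Lebesgue measure. The convergence of the boundaries then handles the inlet and outlet: as $\Pi_0\cap\overline{\Om_n}=E$ and $\Pi_L\cap\overline{\Om_n}=S$ hold for every $n$ with $E$ and $S$ \emph{fixed}, passing to the Hausdorff limit of $\overline{\Om_n}$ in the planes $\Pi_0$ and $\Pi_L$ forces $\Pi_0\cap\overline{\Om}=E$ and $\Pi_L\cap\overline{\Om}=S$. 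Finally, I would argue that connectedness (and simple connectedness) survive in the limit: the uniform cone condition prevents a domain from pinching off or splitting, so no connected component can disappear or be created and no handle can be generated in the passage to the limit.

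I expect the main obstacle to be the simultaneous control of the measure and of the topology through the limit. The volume constraint is the classical difficulty that the cone property is designed to overcome, but verifying that the limiting set is still \emph{simply connected}, rather than merely connected, is the point requiring the most care, and it is here that the quantitative uniformity built into the $\ep$-cone property must be exploited to exclude the formation of cavities or tunnels.
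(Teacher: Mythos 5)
Your general framework --- stability of the $\ep$-cone property under Hausdorff convergence, together with the equivalence (for uniform-cone families) of Hausdorff convergence of the domains, of their boundaries, and of their characteristic functions, which gives the volume constraint --- is exactly the paper's starting point. The genuine gap is in your treatment of the inlet and outlet. You assert that since $\Pi_0\cap\overline{\Om_n}=E$ for every $n$, ``passing to the Hausdorff limit in the plane $\Pi_0$'' forces $\Pi_0\cap\overline{\Om}=E$. But intersection with a fixed plane is not a Hausdorff-continuous operation, and the two inclusions have very different natures. The inclusion $\Pi_0\cap\overline{\Om}\subset E$ does follow from the stability of inclusions under Hausdorff convergence of compact sets. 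The reverse inclusion $E\subset\overline{\Om}$ is precisely where something could go wrong --- a priori the domains $\Om_n$ could recede from part of $E$ in the limit --- and it is here that the paper genuinely uses the $\ep$-cone property pointwise: for $\mathbf{x_0}\in E$ one takes the cone $C(\ep,\mathbf{x_0},\xi_n)\subset\Om_n$ provided by the cone property, extracts a subsequence with $\xi_n\to\xi$ so that $C(\ep,\mathbf{x_0},\xi_n)\to C(\ep,\mathbf{x_0},\xi)$ in the Hausdorff sense, and concludes by stability of inclusions that $C(\ep,\mathbf{x_0},\xi)\subset\Om$, whence $\mathbf{x_0}\in\overline{\Om}$. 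Without this (or an equivalent) argument your proof does not establish that the limit domain still admits $E$ and $S$ as inlet and outlet, which is the only part of the lemma that is not a direct citation of Chenais' stability theorem.

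A smaller remark: your concern about simple connectedness is legitimate in principle, since the class $\mathcal{O}_V$ requires it, but your justification (``the cone condition prevents pinching or the creation of handles'') is again only an assertion rather than an argument; note that the paper's own proof does not address this point at all, so it is not the step that separates your attempt from the published one.
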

\begin{proof} We recall that the class of open sets with the
$\ep$-cone property is closed for the Hausdorff convergence (see
Theorem 2.4.10 in \cite{HP}). Moreover, the convergence also holds
for characteristic functions, so the volume constraint is preserved.
So, it remains just to prove that the properties defining the inlet
$E$ and the outlet $S$ are preserved. Let $(\Omega _n)_{n\in \N}$ be
a sequence of domains in $\mathcal{O}_{V}^\varepsilon$ which
converges, for the Hausdorff distance, to a domain $\Om$. We want to
prove that $\Pi_ 0\cap \overline{\Omega}=E$ and $\Pi_ L\cap
\overline{\Omega}=S$. The first inclusion $\Pi_ 0\cap
\overline{\Omega}\subset E$ is just a consequence of the stability
of inclusion for the Hausdorff convergence of compact sets. Let us
prove the reverse inclusion: let $\mathbf{x_0}\in E$ and $n\in \N$.
Since $\Omega _n$ has the $\varepsilon$-cone property, there exists
a unit vector $\xi _n$ such that the cone $C(\varepsilon ,
\mathbf{x_0},\xi _n)$ be contained in $\Omega _n$. Up to a
subsequence, one can assume that $(\xi _n)$ converges to some unit
vector $\xi$ and that the sequence of cones $ C(\varepsilon ,
\mathbf{x_0},\xi _n)$ converges (for the Hausdorff distance) to the
cone $C(\varepsilon , \mathbf{x_0},\xi )$.  By stability with
respect to inclusion, one has
$$
\left. \begin{array}{r}
\forall n \in \N , \ C(\varepsilon , \mathbf{x_0},\xi _n)\subset \Omega_n \\
C(\varepsilon , \mathbf{x_0},\xi _n)\xrightarrow[n\to +\infty]{H}C(\varepsilon , \mathbf{x_0},\xi )\\
\Omega _n \xrightarrow[n\to +\infty]{H}\Omega
\end{array}\right\}\Longrightarrow C(\varepsilon , \mathbf{x_0},\xi )\subset \Omega .
$$
Therefore $\mathbf{x_0}\in \overline{\Omega}$, and since
$\mathbf{x_0}\in E\subset \Pi _0$, the reverse inclusion is proved.
\end{proof}

\medskip
We are now in position to give our existence result.
\begin{theo}
The problem
\begin{equation}\label{PbOptimExist}
\left\{\begin{array}{l}
\min J(\Omega )\\
\Omega \in \mathcal{O}_{V}^\ep,
\end{array}\right.
\end{equation}
where $J$ is defined in (\ref{crit}) with $\mathbf{u}$ the velocity,
solution of the Navier-Stokes problem (\ref{NSinit}), and
$\mathcal{O}_{V}^\ep$ is defined in (\ref{class}), has a solution.
\end{theo}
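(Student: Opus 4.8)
The plan is to use the direct method of the calculus of variations, which is the standard approach for proving existence of optimal shapes in a class with a compactness property. The overall structure has three ingredients: compactness of the admissible class, continuity (or lower semicontinuity) of the functional $J$ with respect to the chosen topology, and the fact that a minimizing sequence stays in the class. Let me sketch each.

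First I would take a minimizing sequence $(\Om_n)_{n\in\N}$ in $\mathcal{O}_V^\ep$, so that $J(\Om_n)\to \inf_{\Om\in\mathcal{O}_V^\ep} J(\Om)$. Since all these domains satisfy the same $\ep$-cone property and are contained in the fixed compact set $D$, the compactness theorem for the $\ep$-cone class (Theorem 2.4.10 in \cite{HP}) yields a subsequence, still denoted $(\Om_n)$, converging for the Hausdorff distance to some domain $\Om^*$. By Lemma \ref{OepsFerme}, the class $\mathcal{O}_V^\ep$ is closed for this convergence, so the limit $\Om^*$ is itself admissible, $\Om^*\in\mathcal{O}_V^\ep$. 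This disposes of the compactness and admissibility questions and reduces everything to the continuity of $J$.

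The crux is therefore to prove that $J(\Om_n)\to J(\Om^*)$, and this is where the main obstacle lies. One must show that the solutions $(\mathbf{u}_n,p_n)$ of the Navier-Stokes system \eqref{NSinit} on $\Om_n$ converge, in a suitable sense, to the solution $(\mathbf{u}^*,p^*)$ on $\Om^*$, strongly enough in $H^1$ that $\int_{\Om_n}|\varepsilon(\mathbf{u}_n)|^2\,\d x$ passes to the limit. The $\ep$-cone property is precisely what makes this work: it provides uniform Lipschitz regularity of the boundaries, hence a uniform extension operator and uniform Poincar\'e/trace constants, so that the a priori $H^1$ estimates on $\mathbf{u}_n$ (coming from the energy estimate and the largeness of $\mu$ guaranteeing uniqueness) are uniform in $n$. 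I would extend each $\mathbf{u}_n$ to the fixed box $D$ with a uniformly bounded norm, extract a weakly convergent subsequence, and then identify the weak limit as the solution on $\Om^*$ by passing to the limit in the weak formulation. The delicate points are handling the nonlinear convective term $\nabla\mathbf{u}\cdot\mathbf{u}$ (which requires a compact embedding $H^1\hookrightarrow L^4$ to pass to the limit, again uniform thanks to the cone condition) and ensuring the boundary conditions on the moving boundary $\Gamma_n$ are respected in the limit; the fixed inlet and outlet $E$, $S$ are stable by Lemma \ref{OepsFerme}, so only the no-slip condition on $\Gamma_n$ demands care.

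Finally, once strong $H^1$ convergence of the velocities is established, the continuity $J(\Om_n)\to J(\Om^*)$ follows, so $J(\Om^*)=\inf_{\Om\in\mathcal{O}_V^\ep}J(\Om)$ and $\Om^*$ is the sought minimizer. I expect the hardest part to be the uniform estimates and the continuity of the state with respect to the domain under mere Hausdorff convergence: turning the geometric convergence $\Om_n\xrightarrow{H}\Om^*$ into functional convergence of the PDE solutions is the technical heart of the argument, and it is exactly there that the $\ep$-cone property is indispensable, since without such uniform regularity the functional $J$ need not be continuous and the infimum might not be attained.
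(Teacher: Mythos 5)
Your proposal follows essentially the same route as the paper: direct method with a minimizing sequence, Hausdorff compactness of the $\ep$-cone class, closedness via Lemma \ref{OepsFerme}, uniform $H^1$ bounds (the paper gets these simply from boundedness of $J(\Omega_n)$ plus Korn and Poincar\'e after extending $\mathbf{u}_n$ by zero, which the no-slip condition on $\Gamma_n$ permits), and passage to the limit in the variational formulation including the trilinear term and the boundary condition on $\Gamma$ via Keldys stability. The only nuance is that the paper does not need (and does not prove) strong $H^1$ convergence: weak convergence plus weak lower semicontinuity of $\int|\varepsilon(\cdot)|^2$ suffices, as your own parenthetical ``or lower semicontinuity'' anticipates.
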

\begin{proof} Let $(\Omega _n)_{n\in \N}$, be a minimizing sequence in
$\mathcal{O}_{V}^\varepsilon$. Since the open sets $\Omega _n$ are
contained in a fixed compact set $D$, there exists a subsequence,
still denoted by $\Om_n$ which converges (for the Hausdorff
distance, but also for the other usual topologies) to some set
$\Omega$. Moreover, according to Lemma \ref{OepsFerme}, $\Omega$
belongs to the class $\mathcal{O}_{V}^\varepsilon$.

\par To prove the existence result, it remains to prove continuity
(or lower-semi continuity) of the criterion $J$. For any $n\in \N$,
we denote by $\mathbf{u_n}$ and $p_n$ the solution of the
Navier-Stokes system (\ref{NSinit}) on $\Omega _n$. Due to the
homogeneous Dirichlet boundary condition on the lateral boundary
$\Gamma$, we can extend by zero $\mathbf{u_n}$ and $p_n$ outside
$\Omega_n$. So we can consider that the functions are all defined on
the box $D$ and the integrals over $\Om_n$ and over $D$ will be the
same. Let us first remark that $(\mathbf{u_n})$ is uniformly bounded
in $H^1(D)$. Indeed, the sequence $\int_{\Omega_n} |\varepsilon
(\mathbf{u_n})|^2\d x=\int_D |\varepsilon (\mathbf{u_n})|^2\d x$ is
bounded by definition and the result follows using Korn's inequality
on the set $D$ together with a Poincaré's inequality (see below
proof of proposition \ref{ExistenceRegulariteAdjoint}).

\par Therefore, according to reflexivity of $H^1$ and the
Rellich-Kondrachov's Theorem, there
exists a vector $\mathbf{u}\in [H^1(D)]^3$ and a subsequence, still
denoted  $ \mathbf{u_n}$ such that :
$$
\mathbf{u_n}\stackrel{H^1}{\rightharpoonup }\mathbf{u}\textnormal{
and }\mathbf{u_n}\xrightarrow[]{L^q}\mathbf{u}, \ \forall q\in
[1,6[.
$$
It remains to prove that $\mathbf{u}$ is the velocity solution of
the Navier-Stokes system on $\Om$. Let us write the variational
formulation of (\ref{NSinit}). For any function $\mathbf{w}$
satisfying
$$
\mathbf{w}\in [H^1(D )]^3:\mathbf{w}=0\textnormal{ on }E\cup \Gamma
\textnormal{ and }\divgce \mathbf{w}=0 \textnormal{ in }D,
$$
and for all $n\in \N$, the function $\mathbf{u_n}$ verifies :
\begin{equation}\label{vfNS}
    \int_D \left(2\mu \varepsilon (\mathbf{u_n}):\varepsilon
(\mathbf{w})+\nabla \mathbf{u_n}\cdot \mathbf{u_n}\cdot
\mathbf{w}\right)\d x =\int_{S} \mathbf{h}.\mathbf{u_n}\cdot
\mathbf{w}\d s
\end{equation}
Since we have weak convergence of $\mathbf{u_n}$, it comes :
$$
\int_D \varepsilon (\mathbf{u_n}):\varepsilon (\mathbf{w})\d
x\xrightarrow[n\to +\infty]{}\int_D \varepsilon
(\mathbf{u}):\varepsilon (\mathbf{w})\d x.
$$
Let us now have a look to the trilinear term. We already know that
$\nabla \mathbf{u_n}\stackrel{L^2(D)}{\rightharpoonup}\nabla
\mathbf{u}$. Moreover, from Cauchy-Schwarz's inequality and
Sobolev's embedding Theorem, we have:
\begin{eqnarray*}
\Arrowvert (\mathbf{u_n}-\mathbf{u})\cdot \mathbf{w}\Arrowvert _{[L^2(D)]^3}^2
& \leq & \sum_{i=1}^3\sqrt{\int_{\Omega} (u_{n,i}-u_i)^4\d x\int_{\Omega} w_i^4\d x}\\
 & \leq & 3\Arrowvert \mathbf{u_n}-\mathbf{u}
 \Arrowvert _{[L^4(D)]^3}^2\Arrowvert \mathbf{w}\Arrowvert _{[L^4(D)]^3}^2.
\end{eqnarray*}
Then $(\mathbf{u_n}\cdot \mathbf{w})_{n\in \N}$ converges strongly
in $L^2(D)$ to $\mathbf{u}\cdot \mathbf{w}$. Therefore,
$$
\int _D \nabla \mathbf{u_n}\cdot \mathbf{u_n}\cdot \mathbf{w}\d x
\xrightarrow[n\to +\infty]{}\int_D\nabla \mathbf{u}\cdot
\mathbf{u}\cdot \mathbf{w}\d x.
$$
Finally, weak convergence of $\mathbf{u_n}$ in $[H^1(D)]^3$ implies
weak convergence of the trace in $L^2(S)$ and the boundary term
$\int_{S} \mathbf{h}.\mathbf{u_n}\cdot \mathbf{w}\d s$ in
(\ref{vfNS}) converges to $\int_{S} \mathbf{h}.\mathbf{u}\cdot
\mathbf{w}\d s$. Therefore, $\mathbf{u}$ satisfies the variational
formulation (\ref{vfNS}) (and also the boundary condition
$\mathbf{u}=\mathbf{u_0}$ on $E$ because every $\mathbf{u_n}$
satisfies it). To conclude, it remains to prove that $\mathbf{u}$ is
zero on the lateral boundary $\Gamma$. It is actually a consequence
of the convergence in the sense of compacts of $\Omega _n$ to
$\Omega$, and the fact that $\Omega$ is Lipschitz and then stable in
the sense of Keldys. We refer to Theorem 2.4.10 and Theorem 3.4.7 in
\cite{HP}. \end{proof}

\medskip
We are now concerned with symmetry properties of the minimizer. When
the state system is Stokes instead of Navier-Stokes the following
result can be proved:
\begin{theo}\label{theosym}
There exists a minimizer of the problem (\ref{PbOptimExist}) (with
the Stokes system as state equation) which has a plane of symmetry
containing the vertical axis.\\
Moreover, any minimizer of class $C^2$ has such a plane symmetry.
\end{theo}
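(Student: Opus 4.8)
The plan is to exploit the invariance of the whole problem under the reflections $\sigma_\theta$ across the planes $P_\theta$ containing the $x_3$-axis. Indeed, the box $D$, the volume constraint, the $\ep$-cone class, the inlet and outlet discs $E,S$ and the data $\mathbf{u_0}$ and $\mathbf{h}$ are all invariant under every such reflection (one checks directly that $\mathbf{u_0}$ and $\mathbf{h}$ are transformed into themselves once the velocity is transported as a vector field). Consequently, if $\Om$ is admissible with Stokes solution $\mathbf{u}$, then $\sigma_\theta(\Om)$ is admissible, its solution is obtained from $\mathbf{u}$ by the same reflection, and $J(\sigma_\theta(\Om))=J(\Om)$. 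In particular, existence of a minimizer $\Om^\star$ for the Stokes problem follows exactly as in the previous theorem, the argument being even simpler since the state equation is now linear.

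First I would reduce the symmetry to a single well-chosen plane. For each angle $\theta$, the plane $P_\theta$ splits $\Om^\star$ into two pieces $\Om^+_\theta$ and $\Om^-_\theta$; the function $\theta\mapsto |\Om^+_\theta|-|\Om^-_\theta|$ is continuous and changes into its opposite when $\theta$ is increased by $\pi$ (the two halves are exchanged), so by the intermediate value theorem there is an angle $\theta_0$ for which the two halves have equal volume $V/2$. Writing $\sigma=\sigma_{\theta_0}$, this lets me build two symmetric competitors $\Om_1=\Om^+_{\theta_0}\cup\sigma(\Om^+_{\theta_0})$ and $\Om_2=\Om^-_{\theta_0}\cup\sigma(\Om^-_{\theta_0})$. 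Each has volume $V$, is symmetric with respect to $P_{\theta_0}$, and still has inlet $E$ and outlet $S$ (the discs are centered on the axis, hence globally invariant under $\sigma$); the $\ep$-cone property is preserved because $\sigma$ is an isometry and the gluing takes place along the symmetry plane.

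The heart of the proof is then the energy comparison $J(\Om_1)+J(\Om_2)\le 2\,J(\Om^\star)$. The natural tool is the convex variational principle for Stokes: the velocity $\mathbf{u}$ is the unique minimizer of $\mathcal{E}_\Om(\mathbf{v})=\mu\int_\Om|\varepsilon(\mathbf{v})|^2\,\d x-\int_S\mathbf{h}\cdot\mathbf{v}\,\d s$ over the affine convex set of divergence-free fields equal to $\mathbf{u_0}$ on $E$ and to $0$ on $\Gamma$, the Euler--Lagrange equation being precisely the weak form of the Stokes system and coercivity coming from Korn's and Poincar\'e's inequalities. Moreover, by uniqueness and the invariance above, the solution on the symmetric domain $\Om_1$ is itself $\sigma$-invariant, so its normal component vanishes on the cut $P_{\theta_0}\cap\Om_1$ and its restriction to a half solves a Stokes problem on $\Om^+_{\theta_0}$ with the symmetry (slip) condition on the cut; the energy of $\Om_1$ is then twice this half-energy. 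I would combine these two half-problems to bound $J(\Om_1)+J(\Om_2)$ by $2J(\Om^\star)$, so that the minimality of $\Om^\star$ forces equality, $J(\Om_1)=J(\Om_2)=J(\Om^\star)$, and hence $\Om_1$ is a symmetric minimizer, which proves the first assertion.

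I expect this energy comparison to be the main obstacle, and it is exactly what prevents one from reaching full axial symmetry. The tempting shortcut, namely transporting $\mathbf{u}$ by $\sigma$ and gluing it to itself on $\Om_1$, is not admissible: the normal component of $\mathbf{u}$ does not vanish on the cutting plane, so the reflected field has a jump in its normal trace there, is not divergence-free in $\Om_1$, and has infinite $\varepsilon$-energy. The comparison must therefore be carried out intrinsically through the convex functional $\mathcal{E}_\Om$ and the slip condition on $P_{\theta_0}$, rather than by a naive reflection; this is the delicate point. Finally, for the second assertion the $C^2$ regularity allows one to write the first-order optimality condition, an over-determined boundary condition on $\Gamma$ (the object of Section \ref{sec3}). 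For a $C^2$ minimizer the inequality above is an equality, and strict convexity of $\mathcal{E}_\Om$ together with uniqueness of the Stokes solution and unique continuation from the over-determined condition force $\Om^\star$ to coincide with $\sigma(\Om^\star)$, so that $\Om^\star$ itself is symmetric with respect to $P_{\theta_0}$.
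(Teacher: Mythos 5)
Your strategy coincides with the paper's: find, by an intermediate-value argument, a plane $P_{\theta_0}$ through the axis that bisects the volume of a minimizer, reflect the better half to build a symmetric competitor, compare energies through the variational characterization of the Stokes solution, and handle the $C^2$ case via the equality case plus unique continuation. The divergence is at the one step that carries all the weight: the inequality $J(\Om_1)+J(\Om_2)\le 2J(\Om^\star)$ (equivalently $J(\widehat{\Omega})\le 2J_1\le J(\Om^\star)$ for the better half). The paper obtains it by exactly the ``tempting shortcut'' you rule out: it glues $\mathbf{u}|_{\Om_1}$ to its reflection on $\sigma(\Om_1)$, declares the glued field to be in $H^1$ and divergence-free, and uses it as a test field for the functional $2\mu\int|\varepsilon(\cdot)|^2$. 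Your objection is substantive: with the velocity transported as a vector field, the two traces on the cut agree only where $\mathbf{u}\cdot\nu=0$ on $P_{\theta_0}\cap\Om^\star$ ($\nu$ the normal to the cutting plane), which is not guaranteed, so admissibility of the glued competitor is genuinely the delicate point (the paper passes over it with ``it is clear that''). But identifying the obstacle is not resolving it.

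Your proposed substitute does not close the gap. You reduce $J(\Om_1)$ to twice the energy of a half-problem on $\Om^+_{\theta_0}$ with a slip condition on the cut, and then say you ``would combine these two half-problems'' to reach the bound. The only field you have in hand on $\Om^+_{\theta_0}$, namely $\mathbf{u}|_{\Om^+_{\theta_0}}$, does not satisfy the slip condition on the cut, so it is not an admissible competitor for that half-problem, and no comparison between its energy $J_1$ and the half-problem's minimum follows from convexity alone. (A secondary issue: with the nonzero traction $\mathbf{h}$ on $S$, the correct functional is your $\mathcal{E}_\Om$, whose minimum value is not $J(\Om)$; converting $\mathcal{E}$-minimality into a comparison of the quantities $J$ requires tracking the boundary term $\int_S\mathbf{h}\cdot\mathbf{v}$, which you do not do, whereas the paper's $\psi_\Om$ silently drops that term.) Since the first assertion of the theorem, the equality case, and hence the $C^2$ rigidity all hinge on this inequality, your proof is incomplete at its central step: you must either justify the reflection (show the glued field is an admissible $H^1$, divergence-free competitor, or repair it) or actually carry out the intrinsic comparison you allude to.
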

\begin{proof}
Let $\Om$ denotes (one of) the minimizer(s) of problem
(\ref{PbOptimExist}) and $D$ the vertical axis $x_1=x_2=0$. Among
every plane containing $D$, at least one, say $P_0$, cuts $\Om$ in
two sub-domains $\Om_1$ and $\Om_2$ of same volume (volume equals to
$V/2$).
\par Let us now introduce the two quantities $J_1$ and $J_2$ defined by:
$$
J_1:=2\mu \int_{\Omega _1}|\varepsilon (\mathbf{u})|^2 \d x\
\textnormal{ and }\ J_2:=2\mu \int_{\Omega _2}|\varepsilon
(\mathbf{u}|^2\d x ,
$$
so $J(\Omega )=J_1+J_2$. Without loss of generality, one can assume
$J_1\leq J_2$. Let us now consider the new domain $\widehat{\Omega
}=\Omega _1\cup \sigma (\Omega _1)$, where $\sigma$ denotes the
plane symmetry with respect to $P_0$. We also introduce the
functions $(\mathbf{\widehat{u}},\widehat{p})$ defined by
$$
\mathbf{\widehat{u}}(\mathbf{x})=\left\{\begin{array}{ll}
\mathbf{u }(\mathbf{x}) & \textnormal{if }\mathbf{x}\in \Omega _1 \\
\mathbf{u }(\sigma (\mathbf{x})) & \textnormal{if }\mathbf{x}\in
\sigma (\Omega _1)
\end{array}\right.
\textnormal{ and } \widehat{p}(\mathbf{x})=\left\{\begin{array}{ll}
p (\mathbf{x}) & \textnormal{if }\mathbf{x}\in \Omega _1 \\
p (\sigma (\mathbf{x})) & \textnormal{if }\mathbf{x}\in \sigma
(\Omega _1)
\end{array}\right.
$$
It is clear that $\mathbf{\widehat{u}}\in
[H^1(\widehat{\Omega})]^3$, $\widehat{p}\in L^2(\widehat{\Omega})$
and $\divgce \,\mathbf{\widehat{u}}=0$. Moreover
$$
2\mu \int _{\widehat{\Omega }}|\varepsilon (\mathbf{\widehat{u
}})|^2\d x=4\mu \int _{\Omega _1}|\varepsilon (\mathbf{u ^\star
})|^2\d x=2J_1 \leq J(\Omega ).
$$
Now, it is well known that the solution of our Stokes problem can
also be defined as the unique minimizer of the functional
$$\psi_\Om(\mathbf{v})):=2\mu \int_{\Omega
}|\varepsilon (\mathbf{v})|^2\d x$$ on the space
$$
V(\Omega ):=\{\mathbf{v}\in H^1(\Omega ) : \divgce \mathbf{v}=0, \
\mathbf{v}_{\mid _E}=\mathbf{u_0}\textrm{ and }\mathbf{v}_{\mid
_\Gamma}=0\}.
$$
Therefore, we have:
\begin{equation}\label{sym2}
\begin{array}{rcl}
\displaystyle  J(\widehat{\Omega }) & = & \min _{\mathbf{v}\in
V(\widehat{\Omega })}\left(2\mu
\int_{\widehat{\Omega }}|\varepsilon (\mathbf{v})|^2\d x\right) \\
\displaystyle  & \leq & 2\mu \int_{\widehat{\Omega }}|\varepsilon
 (\mathbf{\widehat{u}})|^2\d x\;\leq  J(\Omega ),
\end{array}
\\
 \\
\end{equation}
this proves that $\widehat{\Omega }$, which has the same volume as
$\Om$ and is symmetric with respect to $P_0$, is also a minimizer of
$J$.

\medskip
Now, let us prove that if $\Om$ is regular enough (actually $C^2$
but one can weaken as shown by the proof below), it must coincide
with $\widehat{\Omega }$, and therefore is symmetric. Necessarily,
we must have the equality in the chain of inequalities (\ref{sym2}).
It proves, in particular, that $\mathbf{\widehat{u}}$ is the
solution of the Stokes problem on $\widehat{\Omega }$. But since
$\mathbf{\widehat{u}}$ coincides with $\mathbf{u}$ on $\Om_1$ by
definition, one can use the analyticity of the solution of the
Stokes problem (see e.g. \cite{morrey}) to claim that
$\mathbf{\widehat{u}}=\mathbf{u}$ on $\Om\cap \widehat{\Omega }$.
Now, if $\widehat{\Omega }$ would not coincide with $\Om$, we would
have a part of the boundary of $\Om$, say $\gamma$ included in
$\widehat{\Omega }$. By assumption, $\Om$ being $C^2$, the solution
of the Stokes problem is continuous up to the boundary (see
\cite{galdi}) and therefore $\mathbf{\widehat{u}}$ should vanish on
$\gamma$. By analyticity, it would imply that it vanishes
identically: a contradiction with the boundary condition on $E$.
\end{proof}

\medskip
As explained in the introduction, one can wonder whether the
minimizer has more symmetry. In particular, could the cylinder be
the minimizer? The following Theorem proves that it is not the case.
It is the main result of this paper. The proof is absolutely not
obvious and will be given at the next section. Let us remark that
the following result also holds for the Stokes equation. The proof
in the Stokes case follows the same lines and is a little bit
simpler, see \cite{privat} for details.
\begin{theo}\label{MainTheoNSCylindre-en}
The cylinder is not the solution of the shape optimization problem
\begin{equation}\label{PbOptimS}
\left\{\begin{array}{l}
\min J(\Omega )\\
\Omega \in \mathcal{O}_{V},
\end{array}\right.
\end{equation}
where $J$ is defined in (\ref{crit}) with $\mathbf{u}$ the velocity,
solution of the Navier-Stokes problem (\ref{NSinit}), and
$\mathcal{O}_{V}$ is defined in (\ref{class1}).
\end{theo}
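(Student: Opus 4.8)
The plan is to argue by contradiction: assume the cylinder $\Om^\star$ solves (\ref{PbOptimS}) and write down the first order optimality condition, then show it cannot hold. Since only the lateral boundary $\Gamma$ is free while $E$, $S$ and the volume are fixed, I would deform $\Om^\star$ by a vector field $\mathbf{V}$ supported away from $E\cup S$ and compute the shape derivative of $J$. To eliminate the (material) derivative of the state $\mathbf{u}$, the natural tool is the adjoint state: differentiating the Lagrangian built from $J$ and from the weak formulation (\ref{vfNS}), the state derivative drops out provided $(\mathbf{v},q)$ solves the adjoint (linearized) Navier--Stokes system whose principal part is
\begin{equation*}
-\mu\triangle\mathbf{v}+\nabla q-(\mathbf{u}\cdot\nabla)\mathbf{v}+(\nabla\mathbf{u})^{T}\cdot\mathbf{v},\qquad \divgce\,\mathbf{v}=0\ \textnormal{ in }\Om,
\end{equation*}
driven by a right-hand side issued from the differentiation of the integrand $|\varepsilon(\mathbf{u})|^2$, with homogeneous Dirichlet data $\mathbf{v}=0$ on $E\cup\Gamma$ and the corresponding adjoint stress condition on $S$. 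The outcome is a boundary expression
\begin{equation*}
dJ(\Om^\star;\mathbf{V})=\int_\Gamma G\,(\mathbf{V}\cdot\mathbf{n})\,\d s,
\end{equation*}
where the density $G$ depends only on the traces of $(\mathbf{u},p)$ and $(\mathbf{v},q)$ and on the geometry of $\Gamma$.

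Next I would write the Euler--Lagrange condition. The shape derivative of the constraint $|\Om|=V$ is $\int_\Gamma \mathbf{V}\cdot\mathbf{n}\,\d s$, so optimality of $\Om^\star$ in the class $\mathcal{O}_V$ forces, through a Lagrange multiplier $\lambda$, the pointwise relation $G=\lambda$ on $\Gamma$. Now on $\Gamma$ one has both $\mathbf{u}=0$ and $\mathbf{v}=0$, hence all tangential derivatives of $\mathbf{u}$ and $\mathbf{v}$ vanish there and $G$ collapses to an expression in the normal derivatives $\p_{\mathbf{n}}\mathbf{u}$ and $\p_{\mathbf{n}}\mathbf{v}$ alone. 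For the cylinder the state is the explicit profile (\ref{expli}); note in particular that $(\mathbf{u}\cdot\nabla)\mathbf{u}=0$ there, so the flow is genuinely of Stokes/Poiseuille type and $\p_{\mathbf{n}}\mathbf{u}$ is known in closed form on $\Gamma=\{x_1^2+x_2^2=R^2\}$. Substituting this explicit datum, the condition $G=\lambda$ becomes a prescribed, fully explicit condition on $\p_{\mathbf{n}}\mathbf{v}$ over all of $\Gamma$. Together with $\mathbf{v}=0$ on $\Gamma$, this is an \emph{over-determined} boundary value problem for the adjoint.

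The final, and hardest, step is to show that this over-determined adjoint problem admits no solution on the cylinder. Because the cylinder and the explicit state (\ref{expli}) are invariant under rotations about the $x_3$-axis and the adjoint system has simple explicit coefficients, I would solve for $(\mathbf{v},q)$ by separation of variables: Fourier expansion in the angular variable and in $x_3$, reducing to elementary (Bessel-type) ordinary differential equations in the radial variable, with the genuine adjoint conditions on $E$ and $S$ determining $(\mathbf{v},q)$ essentially uniquely. One then checks by direct computation that the extra Cauchy datum $\p_{\mathbf{n}}\mathbf{v}$ forced on $\Gamma$ by $G=\lambda$ cannot be met for any constant $\lambda$. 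I expect the main obstacle to be precisely this incompatibility: unlike scalar Serrin-type problems, there is no symmetrization available for the vector adjoint system, so the contradiction must come either from the explicit solution above or from a unique-continuation argument using the analyticity of Stokes solutions (see \cite{morrey}), the over-determined Dirichlet/Neumann data on $\Gamma$ then forcing $\mathbf{v}\equiv 0$ and contradicting the interior source. Either route yields $G\not\equiv\lambda$ on $\Gamma$, contradicting optimality and proving that the cylinder is not a minimizer.
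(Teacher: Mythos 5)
Your setup reproduces the paper's strategy faithfully up to the point where the real work begins: contradiction hypothesis, shape derivative via the adjoint state $(\mathbf{v},q)$, Lagrange multiplier for the volume constraint, and the resulting over-determined condition on $\Gamma$. In the paper this condition collapses (using $\mathbf{u}=\mathbf{v}=0$ on $\Gamma$ and the explicit Poiseuille profile) to the single scalar relation $\partial v_3/\partial n=\xi$ constant on $\Gamma$. The gap is in your final step, which is the heart of the proof and is only sketched, and neither of the two routes you propose goes through. Separation of variables does work in the angular variable (this is exactly the paper's Lemma \ref{symetrieEDP}: only the axisymmetric mode survives, so $v_i=x_iw(r,x_3)$, $v_3=w_3(r,x_3)$, $q=\tilde q(r,x_3)$), but it does \emph{not} reduce further to Bessel-type ODEs in $r$: the convection term $-\nabla\mathbf{v}\cdot\mathbf{u}$ contributes $-c(r^2-R^2)\partial_{x_3}$, whose coefficient depends on $r$, and the boundary conditions at $E$ (Dirichlet) and $S$ (stress) are of different type, so no Fourier or eigenfunction expansion in $x_3$ is available. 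You are left with a genuinely two-dimensional problem in $(r,x_3)$ with no closed-form solution to test against the extra datum. The unique-continuation route fails for a different reason: the over-determination is a single scalar condition $\partial v_3/\partial n=\xi$ with $\xi$ an \emph{unknown} constant, not full Cauchy data $(\mathbf{v},\partial_{\mathbf{n}}\mathbf{v})$ or $(\mathbf{v},\sigma(\mathbf{v},q)\mathbf{n})$ on $\Gamma$ (in particular the trace of $q$ on $\Gamma$ is not prescribed), so you cannot conclude $\mathbf{v}\equiv 0$.

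What the paper does instead, and what is missing from your proposal, is an integral-identity argument. After establishing the axial symmetry, it proves $\partial\tilde q/\partial r=0$ on $\{r=R\}$ (Lemma \ref{qGamma}, which already uses the optimality condition), introduces the auxiliary function
\begin{equation*}
\psi(z)=2\pi\int_0^R\bigl(\tilde q(r,z)-2cr^2w_0(r,z)\bigr)r\,\d r,\qquad w_0(r,z)=\int_0^z w(r,t)\,\d t,
\end{equation*}
and shows $\psi$ is affine by taking the divergence of the adjoint momentum equation and integrating over horizontal slabs (Lemma \ref{psiAffine}). It then shows $\xi=0$, and evaluates the constant $\psi'$ at the two ends: the stress condition on $S$ gives $\psi'(L)=-16\mu c\pi R^2$ while the Dirichlet condition on $E$ gives $\psi'(0)=-8\mu c\pi R^2$, a contradiction since $c<0$. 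This global bookkeeping between inlet and outlet is the idea your proposal lacks; without it, or a worked-out substitute, the claim that the over-determined adjoint problem is unsolvable on the cylinder remains unproved.
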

\section{Proof of the main theorem}\label{sec3}
In all this section, $\Om$ will now denote the cylinder
$\{x_1^2+x_2^2<R^2, 0<x_3<L\}$.
\subsection{Computation of the shape derivative}
Let us consider a regular vector field $\mathbf{ V}:\R^3\to\R^3$
with compact support in the strip $0<x_3<L$. For small $t$, we
define  $\Omega _t=(I+t\mathbf{ V})\Omega$, the image of $\Om$ by a
perturbation of identity and $f(t):=J(\Omega _t)$. We recall that
the shape derivative of $J$ at $\Om$ with respect to $\mathbf{ V}$
is $f'(0)$. We will denote it by $\d J(\Om;\mathbf{ V})$. To compute
it, we first need to compute the derivative of the state equation.
We use here the classical results of shape derivative as in
\cite{HP}, \cite{Mu-Si}, \cite{So-Zo}. The derivative of
$(\mathbf{u},p)$ is the solution of the following linear system:
\begin{equation}
\left\{\begin{array}{ll} \label{PDérivé}
\displaystyle -\mu \triangle \mathbf{u'} +\nabla \mathbf{u}\cdot \mathbf{u'}
+\nabla \mathbf{u'}\cdot \mathbf{u}+\nabla p'=0 & \mathbf{x}\in \Omega \\
\divgce\, \mathbf{u'}=0 & \mathbf{x}\in \Omega \\
\mathbf{u'}=\mathbf{0} & \mathbf{x}\in E \\
\displaystyle  \mathbf{u'}=-\frac{\partial \mathbf{u}}{\partial \mathbf{n}}
(\mathbf{V}\cdot \mathbf{n}) & \mathbf{x}\in \Gamma \\
\displaystyle -p'\mathbf{n}+2\mu \varepsilon (\mathbf{u'})\cdot
\mathbf{n}=0 & \mathbf{x}\in S.
\end{array}
\right.
\end{equation}

Now, we have (see \cite{HP}, \cite{So-Zo})
\begin{equation}\label{deriv1}
    \d J (\Omega , \mathbf{V})=4\mu \int_\Omega \varepsilon
(\mathbf{u}):\varepsilon (\mathbf{u'})\d x+2\mu \int_\Gamma
|\varepsilon (\mathbf{u})|^2(\mathbf{ V}\cdot \mathbf{n})\d s.
\end{equation}

\par It is more convenient to work with another expression of the shape
derivative. For that purpose, we need to introduce an adjoint state.
\begin{prop}\label{ExistenceRegulariteAdjoint}
\par Let us consider $(\mathbf{v},q)$, solution of the following adjoint
problem :
\begin{equation}
\left\{\begin{array}{ll}\label{PAdjoinNS}
\displaystyle -\mu \triangle \mathbf{v} +\nabla \mathbf{u}\cdot
\mathbf{v}-\nabla \mathbf{v}\cdot \mathbf{u}+\nabla q =-2\mu
\triangle \mathbf{u} & \mathbf{x}\in \Omega \\
{\rm div} \,\mathbf{v}=0 & \mathbf{x}\in \Omega \\
\mathbf{v}=\mathbf{0} & \mathbf{x}\in E\cup \Gamma \\
-q\mathbf{n}+2\mu \varepsilon (\mathbf{v})\cdot
\mathbf{n}+(\mathbf{u}\cdot \mathbf{n})\mathbf{v}-4\mu \varepsilon
(\mathbf{u})\cdot \mathbf{n}=0 & \mathbf{x}\in S.
\end{array}
\right.
\end{equation}
If the viscosity $\mu$ is large enough, then the problem
(\ref{PAdjoinNS}) has a unique solution $(\mathbf{v},q)$. Moreover,
this solution belongs to $C^1(\overline{\Omega})\times
C^0(\overline{\Omega}) $.
\end{prop}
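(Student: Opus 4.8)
The plan is to establish existence and uniqueness by casting (\ref{PAdjoinNS}) in weak form and applying the Lax--Milgram theorem, and then to upgrade the weak solution to the announced classical regularity by an elliptic bootstrap. First I would introduce the space
$$
W:=\{\mathbf{w}\in [H^1(\Om)]^3 : \divgce\,\mathbf{w}=0,\ \mathbf{w}=\mathbf{0}\textnormal{ on }E\cup\Gamma\}
$$
and multiply the momentum equation by a test field $\mathbf{w}\in W$. Integrating by parts, using $\divgce\,\mathbf{v}=\divgce\,\mathbf{w}=0$ and the Neumann-type condition on $S$ to rewrite the boundary contributions, the two terms $-\int_S 4\mu\,\varepsilon(\mathbf{u})\mathbf{n}\cdot\mathbf{w}$ cancel and one is led to the problem: find $\mathbf{v}\in W$ with $a(\mathbf{v},\mathbf{w})=\ell(\mathbf{w})$ for all $\mathbf{w}\in W$, where
$$
a(\mathbf{v},\mathbf{w})=\int_\Om 2\mu\,\varepsilon(\mathbf{v}):\varepsilon(\mathbf{w})\,\d x+\int_\Om(\nabla\mathbf{u}\cdot\mathbf{v})\cdot\mathbf{w}\,\d x-\int_\Om(\nabla\mathbf{v}\cdot\mathbf{u})\cdot\mathbf{w}\,\d x+\int_S(\mathbf{u}\cdot\mathbf{n})(\mathbf{v}\cdot\mathbf{w})\,\d s
$$
and $\ell(\mathbf{w})=\int_\Om 4\mu\,\varepsilon(\mathbf{u}):\varepsilon(\mathbf{w})\,\d x$.

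Continuity of $\ell$ and of the symmetric part of $a$ is immediate, while the trilinear terms are continuous on $W$ by H\"older's inequality and the embedding $H^1(\Om)\hookrightarrow L^4(\Om)$ (valid in dimension three), and the surface term by the trace theorem together with the boundedness of $\mathbf{u}$ on $S$. For coercivity I would test with $\mathbf{w}=\mathbf{v}$: the term $\int_\Om(\nabla\mathbf{v}\cdot\mathbf{u})\cdot\mathbf{v}$ equals $\frac12\int_{\p\Om}(\mathbf{u}\cdot\mathbf{n})|\mathbf{v}|^2$ after integration by parts (using $\divgce\,\mathbf{u}=0$), which reduces to $\frac12\int_S(\mathbf{u}\cdot\mathbf{n})|\mathbf{v}|^2$ since $\mathbf{v}$ vanishes on $E\cup\Gamma$, whence
$$
a(\mathbf{v},\mathbf{v})=2\mu\int_\Om|\varepsilon(\mathbf{v})|^2\,\d x+\int_\Om(\nabla\mathbf{u}\cdot\mathbf{v})\cdot\mathbf{v}\,\d x+\frac12\int_S(\mathbf{u}\cdot\mathbf{n})|\mathbf{v}|^2\,\d s.
$$
On the cylinder $\mathbf{u}\cdot\mathbf{n}=u_3=c(x_1^2+x_2^2-R^2)\ge 0$ on $S$ (since $c<0$), so the surface term is nonnegative, and the remaining trilinear term is bounded by $C\|\nabla\mathbf{u}\|_{[L^2(\Om)]^9}\|\varepsilon(\mathbf{v})\|_{L^2}^2$ thanks to Korn's and Poincar\'e's inequalities on $W$. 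Since $\|\nabla\mathbf{u}\|_{L^2}$ is controlled (and small for $\mu$ large), for $\mu$ large enough $a$ is coercive, and Lax--Milgram yields a unique $\mathbf{v}\in W$. The pressure $q\in L^2(\Om)$ is then recovered from the momentum equation by de Rham's theorem (equivalently the Ne\v cas inf--sup inequality), giving existence and uniqueness.

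It then remains to prove the announced regularity, which is the delicate point. I would recast (\ref{PAdjoinNS}) as a stationary Stokes system $-\mu\triangle\mathbf{v}+\nabla q=\mathbf{F}$, $\divgce\,\mathbf{v}=0$, whose right-hand side $\mathbf{F}=-2\mu\triangle\mathbf{u}-\nabla\mathbf{u}\cdot\mathbf{v}+\nabla\mathbf{v}\cdot\mathbf{u}$ is treated as a forcing term; on the cylinder $\triangle\mathbf{u}$ is constant and the coefficients $\mathbf{u},\nabla\mathbf{u}$ are polynomials. Starting from $\mathbf{v}\in H^1$ one has $\mathbf{F}\in L^2$, so $L^p$--regularity for the Stokes operator with mixed Dirichlet--Neumann conditions gives $\mathbf{v}\in W^{2,2}$, hence $\nabla\mathbf{v}\in L^6$ and $\mathbf{F}\in L^3$; iterating this bootstrap raises the integrability until $\mathbf{v}\in W^{2,p}$ with $p>3$, whence $\mathbf{v}\in C^1(\overline\Om)$ and $q\in C^0(\overline\Om)$ by Sobolev embedding. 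The main obstacle is that the cylinder is only Lipschitz: its boundary carries two circular edges where $\Gamma$ meets $E$ and $S$, and the type of boundary condition changes across them, so the global $L^p$ theory does not apply verbatim. I would handle these junctions by a corner and edge analysis in weighted Sobolev spaces (Kondrat'ev, Maz'ya--Rossmann, Grisvard), checking that the right-angle dihedral geometry and the compatibility of the data produce no singular exponent below the threshold needed for $C^1$ regularity; this edge estimate, combined with interior and smooth-boundary regularity, is what closes the argument.
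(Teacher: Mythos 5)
Your existence--uniqueness argument is essentially the one in the paper: the same weak formulation on the divergence-free subspace vanishing on $E\cup\Gamma$, continuity via $H^1(\Omega)\hookrightarrow L^4(\Omega)$, coercivity for $\mu$ large from Korn's and Poincar\'e's inequalities, and De Rham's theorem to recover $q$. Your observation that $\mathbf{u}\cdot\mathbf{n}=c(x_1^2+x_2^2-R^2)\geq 0$ on $S$ (since $c<0$), so the surface term has a favourable sign, is a nice extra. One small inaccuracy: $\Vert\nabla\mathbf{u}\Vert_{L^2}$ is not ``small for $\mu$ large'' --- $\mathbf{u}$ is the fixed Poiseuille flow (\ref{expli}) and does not shrink with $\mu$; what saves coercivity is simply that the term $2\mu\Vert\varepsilon(\mathbf{v})\Vert_{L^2}^2$ dominates the $\mu$-independent trilinear contribution once $\mu$ exceeds an explicit threshold. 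This is exactly how the paper argues.

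The genuine gap is in the regularity part, and it sits precisely where you locate it. Your bootstrap requires, already at its first step, global $W^{2,2}$ (then $W^{2,p}$) regularity for a Stokes system with Dirichlet data on $E\cup\Gamma$ and traction data on $S$, on a domain whose boundary carries two circular edges, across one of which the boundary condition changes type. That is not an off-the-shelf result, and the entire content of the claim is your unproved assertion that the right-angle dihedral geometry produces ``no singular exponent below the threshold needed for $C^1$''. For the Stokes operator (as opposed to the scalar Laplacian) with a Dirichlet/traction transition along an edge, the singular exponents solve a transcendental equation and must actually be computed; moreover an integer exponent can carry a logarithmic singular function, which would ruin $C^1(\overline{\Omega})$. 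So the decisive step is asserted rather than established, and the bootstrap is circular without it. The paper sidesteps this computation entirely: it first proves (Lemma \ref{symetrieEDP}, whose proof uses no regularity) that $(\mathbf{v},q)$ is axisymmetric, rewrites (\ref{PAdjoinNS}) as a two-dimensional problem in $(r,x_3)$ on the rectangle $(0,R)\times(0,L)$, and then reflects the solution across the line $r=R$; after reflection the two problematic corners $(R,0)$ and $(R,L)$ become interior points of the edges $\{x_3=0\}$ and $\{x_3=L\}$, where a single boundary condition is in force, so classical up-to-the-boundary elliptic regularity gives $C^1$. If you want to keep your route you must actually carry out the edge spectral analysis (Kondrat'ev/Maz'ya--Rossmann for the Stokes system with mixed conditions); otherwise the symmetry-plus-reflection reduction is the intended, and much shorter, repair.
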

\begin{proof}
The existence and uniqueness of the solution is a standard
application of Lax-Milgram's lemma. We introduce the Hilbert space
$$
V(\Omega ) :=\{\mathbf{u}\in H^1(\Omega ) : \divgce \mathbf{u}=0\}.
$$
the bilinear form $\alpha$ and the linear form $\ell$ defined by
\begin{eqnarray*}
\alpha (\mathbf{v},\mathbf{w}) & := & \int_\Omega
\left(2\mu \varepsilon (\mathbf{v}):\varepsilon (\mathbf{w})+\nabla
\mathbf{w}\cdot \mathbf{u}\cdot \mathbf{v}+\nabla \mathbf{u}\cdot
\mathbf{w}\cdot \mathbf{v}\right)\d x\\
\langle \ell , \mathbf{w}\rangle & := & 4\mu \int _\Omega
\varepsilon (\mathbf{u}): \varepsilon (\mathbf{w})\d x.
\end{eqnarray*}
To prove ellipticity of the bilinear form $\alpha$ we use Korn's
inequality:
$$
\Arrowvert \nabla \mathbf{v}\Arrowvert _{[L^2(\Omega )]^3}\leq
C_1(\Arrowvert \mathbf{v}\Arrowvert _{[L^2(\Omega )]^3}+ \Arrowvert
\varepsilon (\mathbf{v})\Arrowvert _{[L^2(\Omega )]^3}).
$$
and a Poincaré inequality:
\begin{equation}\label{inegPoinca}
\Arrowvert \mathbf{v}\Arrowvert _{[L^2(\Omega )]^3}\leq C_2
\int_\Omega |\varepsilon (\mathbf{v})|^2\d x.
\end{equation}
These two inequalities yield (we also use the explicit expression of
$\mathbf{u}$ given in (\ref{expli}) to estimate the integrals
containing $\mathbf{u}$):
$$
\alpha (\mathbf{v},\mathbf{v})\geq \left(\mu \frac{\min
(1,C_2)}{C_1+1}-|c|(R^2+2R)\right)\Arrowvert \mathbf{v}\Arrowvert
_{[H^1(\Omega )]^3}^2.
$$
and $\alpha$ is elliptic as soon as $\mu
>\frac{|c|(R^2+2R)(C_1+1)}{\min (1,C_2)}$. Now, existence and
uniqueness of the solution follow from a standard application of
Lax-Milgram's lemma together with De Rham's lemma to recover the
pressure.

It remains to prove the regularity of the solution. The $C^\infty$
regularity in $\Omega$ on the one-hand and on the smooth surfaces
$E$, $S$ and the interior of the lateral boundary $\Gamma$ on the
other hand is standard (cf. \cite{galdi}). The only point which is
not clear is the $C^1$ regularity on the circles ${E}\cap
\overline{\Gamma}$ and ${S}\cap \overline{\Gamma}$. To prove it, one
can use the cylindrical symmetry which is proved later (without any
regularity assumptions) in Theorem  \ref{symetrieEDP}. This symmetry
allows us to consider a two-dimensional problem in the rectangle
$(0,R)\times (0,L)$ into the variables $r=(x_1^2+x_2^2)^{1/2}$ and
$x_3$. For that problem, one need to prove regularity at the corners
$(R,0)$ and $(R,L)$. For that purpose, one extends the solution by
reflection around the line $r=R$, this leads to a partial
differential equation in the rectangle $(0,2R)\times (0,L)$ whose
solution coincides with our solution in the first half of the
rectangle. The $C^1$ regularity, up to the boundary, of the solution
of this elliptic p.d.e. is standard and the result follows.
\end{proof}

\medskip
Let us come back to the computation of the shape derivative. We
prove
\begin{prop}\label{deriv2}
With the previous notations, the shape derivative of the criterion
$J$ is given by
\begin{equation}
\label{DeriveeCritere} \d J (\Omega , \mathbf{V})=2\mu \int_\Gamma
\left(\varepsilon (\mathbf{u}):\varepsilon (\mathbf{v})-|\varepsilon
(\mathbf{u})|^2\right)(\mathbf{V}.\mathbf{n})\d s .
\end{equation}
\end{prop}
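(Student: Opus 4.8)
I need to prove that the shape derivative of $J$ has the form
$$\d J (\Omega , \mathbf{V})=2\mu \int_\Gamma \left(\varepsilon (\mathbf{u}):\varepsilon (\mathbf{v})-|\varepsilon (\mathbf{u})|^2\right)(\mathbf{V}.\mathbf{n})\d s.$$

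**Starting point.** I have two things already: the expression (\ref{deriv1}) for the derivative, namely
$$\d J (\Omega , \mathbf{V})=4\mu \int_\Omega \varepsilon (\mathbf{u}):\varepsilon (\mathbf{u'})\d x+2\mu \int_\Gamma |\varepsilon (\mathbf{u})|^2(\mathbf{V}\cdot \mathbf{n})\d s,$$
the PDE (\ref{PDériv}) satisfied by the shape derivative $(\mathbf{u'},p')$ of the state, and the adjoint system (\ref{PAdjoinNS}) defining $(\mathbf{v},q)$. Comparing the target with (\ref{deriv1}), the boundary term $2\mu \int_\Gamma |\varepsilon (\mathbf{u})|^2(\mathbf{V}\cdot \mathbf{n})\d s$ already appears correctly (with the right sign giving the $-|\varepsilon(\mathbf{u})|^2$ piece after a sign flip). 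So the real work is to rewrite the volume term $4\mu \int_\Omega \varepsilon (\mathbf{u}):\varepsilon (\mathbf{u'})\d x$, which involves the hard-to-handle derivative $\mathbf{u'}$, as a boundary integral involving only $\mathbf{u}$ and the adjoint $\mathbf{v}$.

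**The plan.** The whole point of introducing the adjoint state is precisely this elimination of $\mathbf{u'}$. The idea is to recognize that $4\mu \int_\Omega \varepsilon (\mathbf{u}):\varepsilon (\mathbf{u'})\d x = \langle \ell, \mathbf{u'}\rangle$ in the notation of the previous proposition, and that by the definition of the adjoint, $\langle \ell, \mathbf{w}\rangle = \alpha(\mathbf{v},\mathbf{w})$ for all divergence-free $\mathbf{w}$. I would therefore first justify taking $\mathbf{w}=\mathbf{u'}$ as a test function: this requires noting that $\mathbf{u'}$ is divergence-free (from the second line of (\ref{PDériv})), though it does \emph{not} vanish on $\Gamma$, so boundary contributions on $\Gamma$ will survive — this is exactly what produces the final surface integral. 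Concretely, I would write down the weak/variational formulation of the adjoint problem (\ref{PAdjoinNS}) tested against $\mathbf{u'}$, and simultaneously the weak formulation of the primal-derivative problem (\ref{PDériv}) tested against $\mathbf{v}$, integrating by parts in both. The strategy is to subtract these two identities so that the symmetric viscous terms and the transport/convection terms cancel against each other, leaving only (i) the target volume integral $4\mu\int_\Omega \varepsilon(\mathbf{u}):\varepsilon(\mathbf{u'})$ coming from the right-hand side $-2\mu\triangle\mathbf{u}$ of the adjoint (after integrating by parts, $-2\mu\int_\Omega \triangle\mathbf{u}\cdot\mathbf{u'} = 4\mu\int_\Omega \varepsilon(\mathbf{u}):\varepsilon(\mathbf{u'}) - \text{boundary terms}$), and (ii) boundary integrals over $\Gamma$ and $S$.

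**Bookkeeping of boundary terms.** The delicate part is the careful tracking of boundary contributions on $E$, $\Gamma$, and $S$. On $E$ both $\mathbf{u'}$ and $\mathbf{v}$ vanish, so nothing survives. On $S$ I must use the Neumann-type boundary conditions: the fifth line of (\ref{PDériv}) says $-p'\mathbf{n}+2\mu\varepsilon(\mathbf{u'})\cdot\mathbf{n}=0$, and the fourth line of (\ref{PAdjoinNS}) gives the analogous natural condition for $(\mathbf{v},q)$ involving $-4\mu\varepsilon(\mathbf{u})\cdot\mathbf{n}$; these boundary conditions are engineered precisely so that the $S$-contributions cancel. On $\Gamma$, the adjoint $\mathbf{v}$ vanishes ($\mathbf{v}=0$ on $E\cup\Gamma$), so every boundary term carrying a factor of $\mathbf{v}$ drops; but the primal derivative satisfies $\mathbf{u'}=-\frac{\partial\mathbf{u}}{\partial\mathbf{n}}(\mathbf{V}\cdot\mathbf{n})$ there, so the term $2\mu\int_\Gamma \varepsilon(\mathbf{u})\cdot\mathbf{n}\cdot\mathbf{u'}\,\d s$ (arising from integrating $-2\mu\triangle\mathbf{u}$ by parts) survives and, after reducing $\varepsilon(\mathbf{u})\cdot\mathbf{n}\cdot\frac{\partial\mathbf{u}}{\partial\mathbf{n}}$ on the no-slip boundary to $|\varepsilon(\mathbf{u})|^2$, yields the combination seen in the final formula. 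The main obstacle is thus not any single estimate but the error-prone algebra of matching every term across the two integration-by-parts identities and invoking the regularity from Proposition \ref{ExistenceRegulariteAdjoint} (the $C^1$ smoothness up to the boundary, including the edge circles) to guarantee all these boundary integrals are well-defined. A useful simplification I would exploit is that on $\Gamma$, where $\mathbf{u}=0$ with no-slip, tangential derivatives of $\mathbf{u}$ vanish, so $\nabla\mathbf{u}$ is purely normal and $\varepsilon(\mathbf{u})\cdot\mathbf{n}$, $\frac{\partial\mathbf{u}}{\partial\mathbf{n}}$ become proportional in a way that collapses the two boundary contributions into the single clean expression $2\mu\int_\Gamma(\varepsilon(\mathbf{u}):\varepsilon(\mathbf{v})-|\varepsilon(\mathbf{u})|^2)(\mathbf{V}\cdot\mathbf{n})\,\d s$.
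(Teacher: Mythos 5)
Your proposal follows essentially the same route as the paper: integrate the right-hand side $-2\mu\triangle\mathbf{u}$ by parts, test the adjoint system against $\mathbf{u'}$ and the linearized system against $\mathbf{v}$, let the engineered Neumann conditions cancel the contributions on $S$, and reduce the surviving $\Gamma$-terms via the pointwise identities $\mathbf{n}\cdot\frac{\partial\mathbf{u}}{\partial n}=0$, $\varepsilon(\mathbf{u})\cdot\mathbf{n}\cdot\frac{\partial\mathbf{u}}{\partial n}=|\varepsilon(\mathbf{u})|^2$ and $(\varepsilon(\mathbf{v})\cdot\mathbf{n})\cdot\frac{\partial\mathbf{u}}{\partial n}=\varepsilon(\mathbf{u}):\varepsilon(\mathbf{v})$ on the no-slip boundary. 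The plan is correct and matches the paper's proof; the only caveat is that it remains a sketch of the bookkeeping rather than the explicit term-by-term cancellation the paper carries out.
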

\begin{proof} Using Green's formula in (\ref{deriv1}), one gets
\begin{eqnarray*}
\d J (\Omega , \mathbf{V}) & = & 4\mu \int_\Omega \varepsilon
(\mathbf{u}):\varepsilon (\mathbf{u'})\d x+2\mu \int_\Gamma
|\varepsilon (\mathbf{u})|^2(\mathbf{ V}.\mathbf{n})\d s\\
 & = & -2\mu \int_\Omega ((\triangle \mathbf{u}+\nabla \divgce
 \mathbf{u})\cdot \mathbf{u}' )\d x+4\mu \int_{\partial \Omega }
 \varepsilon (\mathbf{u})\cdot \mathbf{n}\cdot \mathbf{u}'\d s \\
  & & +2\mu \int_{\partial \Omega }|\varepsilon (\mathbf{u})|^2
  (\mathbf{V}\cdot \mathbf{n})\d s
\end{eqnarray*}
Now, let us multiply the first equation of the adjoint problem
(\ref{PAdjoinNS}) by $\mathbf{u}'$ and integrate over $\Omega$, one
obtains
\begin{eqnarray*}
-\mu \int_\Omega \triangle \mathbf{v}\cdot \mathbf{u'}\d x+\int_\Omega
\nabla q\cdot \mathbf{u'} \d x +\int_\Omega (\nabla \mathbf{u})^T\cdot
\mathbf{v}\cdot \mathbf{u'}\d x & & \\
-\int_\Omega \nabla \mathbf{v}\cdot \mathbf{u}\cdot \mathbf{u'} \d x
= -2\mu \int_\Omega \triangle \mathbf{u}\cdot \mathbf{u'}\d x. & &
\end{eqnarray*}
Using one integration by parts and the boundary conditions satisfied
by $\mathbf{u}'$ and $\mathbf{v}$, we get
\begin{eqnarray*}
\int_\Omega \left(2\mu \varepsilon (\mathbf{u'}) \cdot \varepsilon
(\mathbf{v}) -\nabla \mathbf{v}\cdot \mathbf{u'} \cdot
\mathbf{u}+\nabla \mathbf{u'}\cdot \mathbf{u}\cdot
\mathbf{v}\right)\d x\\
-\int_S \sigma (\mathbf{v},q)\cdot \mathbf{n}\cdot \mathbf{u'}\d s
+\int_S \left((\mathbf{u}\cdot \mathbf{v})(\mathbf{u'}\cdot
\mathbf{n})-(\mathbf{u}\cdot \mathbf{n})(\mathbf{u'}\cdot
\mathbf{v})\right)\d s\\
-\int_\Gamma \sigma (\mathbf{v},q)\cdot \mathbf{n}) \cdot
\mathbf{u'} \d s= -2\mu \int_\Omega \triangle \mathbf{u}\cdot
\mathbf{u'}\d x.
\end{eqnarray*}
In the same way, if we multiply the first equation of the problem
(\ref{PDérivé}) by $\mathbf{v}$ and integrate over $\Omega$, we
obtain
\begin{eqnarray*}
-\mu \int_\Omega \triangle \mathbf{u'}\cdot \mathbf{v} \d
x+\int_\Omega \nabla p'\cdot \mathbf{v}\d x+\int_\Omega \nabla
\mathbf{u'}\cdot \mathbf{u}\cdot \mathbf{v}\d x +\int_\Omega \nabla
\mathbf{u}\cdot \mathbf{u'}\cdot \mathbf{v} \d x = 0
\end{eqnarray*}
and
\begin{eqnarray*}
\int_\Omega \left(2\mu \varepsilon (\mathbf{u'}) \cdot
\varepsilon (\mathbf{v}) +\nabla \mathbf{u'}\cdot \mathbf{u}\cdot
\mathbf{v}-\nabla \mathbf{v}\cdot \mathbf{u'}\cdot \mathbf{u}\right)\d x & & \\
+\int_S \left(-\sigma (\mathbf{u'},p')\cdot \mathbf{n}\cdot
\mathbf{v}+(\mathbf{u}\cdot \mathbf{v})(\mathbf{u'}\cdot
\mathbf{n})\right)\d s=0. & &
\end{eqnarray*}
Coming back to the shape derivative expression
\begin{eqnarray*}
\d J (\Omega , \mathbf{V}) & = & -2\mu \int_\Omega ((\triangle
\mathbf{u}+\nabla \divgce \mathbf{u})\cdot \mathbf{u}' )\d x+4\mu
\int_{\partial \Omega }\varepsilon (\mathbf{u})\cdot \mathbf{n}\cdot \mathbf{u}'\d s\\
 & & +2\mu \int_{\partial \Omega }|\varepsilon (\mathbf{u})|^2(\mathbf{V}\cdot \mathbf{n})\d s \\
 & = & A+4\mu \int_{\partial \Omega }\varepsilon (\mathbf{u})\cdot
 \mathbf{n}\cdot \mathbf{u}'\d s+2\mu \int_{\partial \Omega }|\varepsilon
 (\mathbf{u})|^2(\mathbf{V}\cdot \mathbf{n})\d s,
\end{eqnarray*}
where we set $\displaystyle A:=-2\mu \int_\Omega ((\triangle
\mathbf{u}+\nabla \divgce \mathbf{u})\cdot \mathbf{u}' )\d x$. Using
the previous identities, we get for $A$
\begin{eqnarray*}
A & = & \int_{\Gamma \cup S}\left(q\mathbf{n}-2\mu \varepsilon
(\mathbf{v})\cdot \mathbf{n}\right)\cdot \mathbf{u}'\d s-\int_S
(\mathbf{u}\cdot \mathbf{n})(\mathbf{v}\cdot \mathbf{u}')\d s.
\end{eqnarray*}
Therefore, according to (\ref{PAdjoinNS})
\begin{eqnarray*}
\d J (\Omega , \mathbf{V}) = \int_{\Gamma \cup S}\left(q\mathbf{n}-
2\mu \varepsilon (\mathbf{v})\cdot \mathbf{n}\right)\cdot
\mathbf{u}'\d s
-\int_S (\mathbf{u}\cdot \mathbf{n})(\mathbf{v}\cdot \mathbf{u}')\d s \\
 +4\mu \int_{S\cup \Gamma} \varepsilon (\mathbf{u})\cdot \mathbf{n}
 \cdot \mathbf{u}'\d s+2\mu \int_\Gamma |\varepsilon (\mathbf{u})|^2
 (\mathbf{ V}.\mathbf{n})\d s\\
 = \int_{\Gamma}\left(q\mathbf{n}-2\mu \varepsilon (\mathbf{v})\cdot
 \mathbf{n}+4\mu \varepsilon (\mathbf{u})\cdot \mathbf{n}\right)\cdot
 \mathbf{u}'\d s+2\mu \int_\Gamma |\varepsilon (\mathbf{u})|^2(\mathbf{ V}.\mathbf{n})\d s\\
 = -\int_{\Gamma}\left(\left(q\mathbf{n}-2\mu \varepsilon (\mathbf{v})
 \cdot \mathbf{n}+4\mu \varepsilon (\mathbf{u})\cdot \mathbf{n}\right)\cdot
 \frac{\partial \mathbf{u}}{\partial n}-2\mu |\varepsilon (\mathbf{u})|^2\right)
 (\mathbf{V}\cdot \mathbf{n})\d s
\end{eqnarray*}
To get the (more symmetric) expression given in
(\ref{DeriveeCritere}), one can use the following elementary
properties. Since $\mathbf{u}$ (and $\mathbf{v}$) is divergence-free
and vanishes on
  $\Gamma$, we have on this boundary:
\begin{itemize}
  \item  $\mathbf{n}\cdot
\frac{\partial \mathbf{u}}{\partial n}=0$.
  \item $\varepsilon
(\mathbf{u}) \cdot \mathbf{n}\cdot \frac{\partial
\mathbf{u}}{\partial \mathbf{n}}= |\varepsilon (\mathbf{u})|^2$.
  \item $(\varepsilon (\mathbf{v})\cdot \mathbf{n})\cdot \frac{\partial
\mathbf{u}}{\partial n}=\varepsilon (\mathbf{u}):\varepsilon
(\mathbf{v})$.
\end{itemize}
Proposition \ref{deriv2} follows. \end{proof}
\subsection{Analysis of the PDE (\ref{PAdjoinNS})}
We will prove the following symmetry result for the solution of the
adjoint system. It shows that the solution has the same symmetry as
the cylinder.
\begin{lemma}\label{symetrieEDP}\ \\
With the same assumptions on $\mu$ as in Proposition
\ref{ExistenceRegulariteAdjoint}, there exist $(w,w_3)\in
[H^1((0,R)\times (0,L))]^2$ and $\tilde{q}\in L^2((0,R)\times
(0,L))$ such that, for any $(x_1,x_2,x_3)\in \Omega$
\begin{description}
\item[(i)] $v_i(x_1,x_2,x_3)=x_i w(r,x_3)$, for $i\in \{1,2\}$ ;
\item[(ii)] $v_3(x_1,x_2,x_3)= w_3(r,x_3)$ ;
\item[(iii)] $q(x_1,x_2,x_3)= \tilde{q}(r,x_3)$.
\end{description}
where $r=(x_1^2+x_2^2)^{1/2}$.
\end{lemma}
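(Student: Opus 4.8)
The plan is to exploit the rotational and reflectional symmetry of the cylinder $\Om$ together with the symmetry of the state $(\mathbf{u},p)$ given explicitly in (\ref{expli}), and to conclude by the uniqueness assertion of Proposition \ref{ExistenceRegulariteAdjoint}. Write $R_\theta$ for the rotation of angle $\theta$ about the $x_3$-axis and $\sigma$ for the reflection $(x_1,x_2,x_3)\mapsto(x_1,-x_2,x_3)$. Both are orthogonal maps preserving $\Om$ and its three boundary pieces $E,\Gamma,S$, and both fix the normal $\mathbf{n}=(0,0,1)$ on $S$. To any orthogonal $g$ fixing the $x_3$-axis I associate the transformed pair $\mathbf{v}^g(\mathbf{x}):=g^{T}\mathbf{v}(g\mathbf{x})$ and $q^g(\mathbf{x}):=q(g\mathbf{x})$. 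The heart of the matter is that the adjoint problem (\ref{PAdjoinNS}) is covariant under this action, so that $(\mathbf{v}^g,q^g)$ solves the \emph{same} system; uniqueness then forces $(\mathbf{v}^g,q^g)=(\mathbf{v},q)$, and the announced form (i)--(iii) is just a reading of this invariance.

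First I would record the covariance of the data. Since $\mathbf{u}=(0,0,c(r^2-R^2))$ and $p=4\mu c(x_3-L)+p_1$ depend only on $r$ and $x_3$ and $\mathbf{u}$ points along the $x_3$-axis, one has $\mathbf{u}^g=\mathbf{u}$, $p^g=p$, hence $\varepsilon(\mathbf{u})^g=\varepsilon(\mathbf{u})$ and $(\triangle\mathbf{u})^g=\triangle\mathbf{u}$, for $g\in\{R_\theta,\sigma\}$. To avoid any regularity issue, and any circularity (the regularity part of Proposition \ref{ExistenceRegulariteAdjoint} invokes the present Lemma), I would work entirely at the $H^1$ level on the variational formulation of the Lax--Milgram step: $\mathbf{v}$ is the unique element of the subspace of $V(\Om)$ of fields vanishing on $E\cup\Gamma$ satisfying $\alpha(\mathbf{v},\mathbf{w})=\langle\ell,\mathbf{w}\rangle$ for all such $\mathbf{w}$, with $\alpha,\ell$ as in the proof of that Proposition (the $S$-condition being encoded as the natural boundary condition). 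Using the change of variables $\mathbf{y}=g\mathbf{x}$ (of Jacobian of modulus $1$), the transformation rule $\nabla(\mathbf{w}^g)=g^{T}(\nabla\mathbf{w})(g\,\cdot)\,g$, the orthogonal invariance of the Frobenius product $A{:}B$ under $A\mapsto g^{T}Ag$, and $\mathbf{u}^g=\mathbf{u}$, one checks term by term that $\alpha(\mathbf{v}^g,\mathbf{w}^g)=\alpha(\mathbf{v},\mathbf{w})$ and $\langle\ell,\mathbf{w}^g\rangle=\langle\ell,\mathbf{w}\rangle$. Since $\mathbf{w}\mapsto\mathbf{w}^g$ is a bijection of the test space (it preserves $\divgce\mathbf{w}=0$ and the vanishing on $E\cup\Gamma$), $\mathbf{v}^g$ again solves the problem, and uniqueness yields $\mathbf{v}^g=\mathbf{v}$, $q^g=q$.

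Finally I would read off (i)--(iii). Invariance $q(R_\theta\mathbf{x})=q(\mathbf{x})$ for all $\theta$ means $q$ depends only on $(r,x_3)$, which is (iii). The equivariance $\mathbf{v}(R_\theta\mathbf{x})=R_\theta\mathbf{v}(\mathbf{x})$ splits into a third component $v_3(R_\theta\mathbf{x})=v_3(\mathbf{x})$, giving $v_3=w_3(r,x_3)$, which is (ii), and a horizontal part asserting that $(v_1,v_2)$ is rotation-equivariant in the plane, hence of the general form $v_1=x_1w(r,x_3)-x_2s(r,x_3)$, $v_2=x_2w(r,x_3)+x_1s(r,x_3)$, with a possible swirl $s$. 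To kill the swirl I use the reflection: on the meridian half-plane $\{x_2=0,\ x_1\ge 0\}$, which $\sigma$ fixes pointwise, the equivariance $\mathbf{v}(\sigma\mathbf{x})=\sigma\mathbf{v}(\mathbf{x})$ forces $v_2=0$ there, i.e. $s\equiv 0$; this gives $v_i=x_iw(r,x_3)$ for $i\in\{1,2\}$, which is (i).

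I expect the only delicate point to be the term-by-term verification that the convective terms $\nabla\mathbf{u}\cdot\mathbf{v}-\nabla\mathbf{v}\cdot\mathbf{u}$ and the Robin-type boundary term $(\mathbf{u}\cdot\mathbf{n})\mathbf{v}-4\mu\,\varepsilon(\mathbf{u})\cdot\mathbf{n}$ on $S$ transform covariantly; this rests on the Jacobian rule above together with $g^{T}\mathbf{n}=\mathbf{n}$ on $S$ and $\mathbf{u}^g=\mathbf{u}$, and is routine but must be carried out with care. Everything else is bookkeeping. It is worth stressing that the reflection symmetry, and not merely the rotational one, is genuinely needed for (i): a nonzero swirl $s$ is perfectly compatible with rotational equivariance alone, and only the reflection invariance rules it out.
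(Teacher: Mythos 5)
Your proof is correct, but it takes a genuinely different route from the paper's, and the comparison is instructive. For (ii)--(iii) the paper works infinitesimally: it applies the angular derivative $\mathfrak{L}_\theta=x_1\partial_{x_2}-x_2\partial_{x_1}$ to the adjoint system and observes that $(z_1,z_2,z_3)=(\mathfrak{L}_\theta v_1+v_2,\ \mathfrak{L}_\theta v_2-v_1,\ \mathfrak{L}_\theta v_3)$ together with $\mathfrak{L}_\theta q$ solves the \emph{homogeneous} adjoint problem, hence vanishes by uniqueness; this triple is exactly $\frac{d}{d\theta}\big|_{\theta=0}\bigl(R_\theta^{T}\mathbf{v}(R_\theta\cdot)\bigr)$, so your finite-rotation argument is the integrated version of the same idea. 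Your version has a real advantage: it lives entirely at the variational level and never differentiates the equation in $\theta$, whereas the paper's manipulation of the strong form tacitly assumes enough smoothness to do so --- and, as you rightly flag, the Lemma must be proved without the $C^1$ regularity of Proposition \ref{ExistenceRegulariteAdjoint}, since that regularity is itself obtained via the Lemma. The sharper divergence is in (i): the paper never invokes a reflection. From $\mathfrak{L}_\theta v_1=-v_2$, $\mathfrak{L}_\theta v_2=v_1$ it gets $\mathfrak{L}_\theta^2v_1+v_1=0$, hence $v_1=x_1\alpha+x_2\beta$, $v_2=-x_1\beta+x_2\alpha$, and then kills the swirl $\beta$ by deriving a scalar boundary-value problem (\ref{equationBeta}) for $\beta$ and running an energy identity whose sign bookkeeping uses $c<0$. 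Your reflection argument ($v_2=0$ on the fixed meridian half-plane, hence $s\equiv0$) reaches the same conclusion more cheaply, needs no sign condition on $c$, and avoids deriving the $\beta$-equation altogether; the price is the (routine but necessary) verification that the convective and Robin terms are covariant under the orientation-reversing map $\sigma$, which you correctly identify as the delicate step and which does check out since $\mathbf{u}^\sigma=\mathbf{u}$ and $\sigma\mathbf{n}=\mathbf{n}$ on $S$. Two small points are left at the same level of informality as in the paper: the claimed $H^1((0,R)\times(0,L))$ regularity of $w$ (a possible $1/r$ degeneracy at the axis) is asserted rather than proved in both arguments, and the identification $q^g=q$ deserves one sentence noting that the pressure is determined not merely up to a constant but uniquely, via the natural condition on $S$.
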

\begin{proof}
Let us introduce the differential operator $\mathfrak{L}_\theta$
defined by
$$
\mathfrak{L}_\theta =x_1\frac{\partial }{\partial
x_2}-x_2\frac{\partial }{\partial x_1}.
$$
$\mathfrak{L}_\theta $ corresponds actually to the differentiation
with respect to the polar angle $\theta$. Let us set
\begin{equation}
\widehat{v_i}=\mathfrak{L}_\theta (v_i), \ \forall i\in
\{1,2,3\}\textrm{ and }\widehat{q}=\mathfrak{L}_\theta (q).
\end{equation}
By applying the operator $\mathfrak{L}_\theta $ to the equation
(\ref{PAdjoinNS}) we get the following system (where we have used
the explicit expression of the solution $\mathbf{u}$ given in
(\ref{expli}))
\begin{equation}\label{HatAdjoinNS}
\left\{\begin{array}{ll}
\displaystyle -\mu \triangle \widehat{v_1} +2cx_1\widehat{v_3}-2cx_2
v_3-c(x_1^2+x_2^2-R^2)\frac{\partial \widehat{v_1}}{\partial x_3}+
\frac{\partial \widehat{q}}{\partial x_1}-\frac{\partial q}{\partial x_2}
=0 & \mathbf{x}\in \Omega \\
\displaystyle -\mu \triangle \widehat{v_2} +2cx_2\widehat{v_3}+2cx_1v_3
-c(x_1^2+x_2^2-R^2)\frac{\partial \widehat{v_2}}{\partial x_3}+
\frac{\partial \widehat{q}}{\partial x_2}+\frac{\partial q}{\partial x_1}
=0 & \mathbf{x}\in \Omega \\
\displaystyle -\mu \triangle \widehat{v_3} -c(x_1^2+x_2^2-R^2)
\frac{\partial \widehat{v_3}}{\partial x_3}+
\frac{\partial \widehat{q}}{\partial x_3}=0 & \mathbf{x}\in \Omega \\
\displaystyle \frac{\partial \widehat{v_1}}{\partial x_1}+
\frac{\partial \widehat{v_2}}{\partial x_2}+
\frac{\partial \widehat{v_3}}{\partial x_3}-\frac{\partial v_1}{\partial x_2}+
\frac{\partial v_2}{\partial x_1}=0 & \mathbf{x}\in \Omega \\
\widehat{v_1}=\widehat{v_2}=\widehat{v_3}=0 & \mathbf{x}\in E\cup \Gamma \\
\displaystyle \mu \left(\frac{\partial \widehat{v_1}}{\partial x_3}+
\frac{\partial \widehat{v_3}}{\partial x_1}\right)-\mu
\frac{\partial v_3}{\partial x_2}+c(x_1^2+x_2^2-R^2)\widehat{v_1}=-4\mu
c x_2 & \mathbf{x}\in S, \\
\displaystyle \mu \left(\frac{\partial \widehat{v_2}}{\partial x_3}+
\frac{\partial \widehat{v_3}}{\partial x_2}\right)+\mu
\frac{\partial v_3}{\partial x_1}+c(x_1^2+x_2^2-R^2)\widehat{v_2}=4\mu
c x_1 & \mathbf{x}\in S, \\
\displaystyle 2\mu \frac{\partial \widehat{v_3}}{\partial
x_3}+c(x_1^2+x_2^2-R^2)\widehat{v_3}=\widehat{q} & \mathbf{x}\in S,
\end{array}
\right.
\end{equation}
Let us now introduce the following new functions
\begin{itemize}
\item $z_1=\widehat{v_1}+v_2$ ;
\item $z_2=\widehat{v_2}-v_1$ ;
\item $z_3=\widehat{v_3}$.
\end{itemize}
According to system (\ref{PAdjoinNS}), the system
(\ref{HatAdjoinNS}) rewrites in term of $z_1$, $z_2$, $z_3$
\begin{equation}
\left\{\begin{array}{ll}
\displaystyle -\mu \triangle z_1 +2cx_1z_3-c(x_1^2+x_2^2-R^2)
\frac{\partial z_1}{\partial x_3}+\frac{\partial \widehat{q}}{\partial x_1}=0 &
\mathbf{x}\in \Omega \\
\displaystyle -\mu \triangle z_2 +2cx_2z_3-c(x_1^2+x_2^2-R^2)
\frac{\partial z_2}{\partial x_3}+\frac{\partial \widehat{q}}{\partial x_2}=0 & \mathbf{x}\in \Omega \\
\displaystyle -\mu \triangle z_3 -c(x_1^2+x_2^2-R^2)\frac{\partial z_3}{\partial x_3}+
\frac{\partial \widehat{q}}{\partial x_3}=0 & \mathbf{x}\in \Omega \\
\displaystyle \frac{\partial z_1}{\partial x_1}+\frac{\partial z_2}{\partial x_2}+
\frac{\partial z_3}{\partial x_3}=0 & \mathbf{x}\in \Omega \\
z_1=z_2=z_3=0 & \mathbf{x}\in E\cup \Gamma \\
\displaystyle \mu \left(\frac{\partial z_1}{\partial x_3}+
\frac{\partial z_3}{\partial x_1}\right)+z_1c(x_1^2+x_2^2-R^2)=0 & \mathbf{x}\in S, \\
\displaystyle \mu \left(\frac{\partial z_2}{\partial x_3}+
\frac{\partial z_3}{\partial x_2}\right)+z_2c(x_1^2+x_2^2-R^2)=0 & \mathbf{x}\in S, \\
\displaystyle 2\mu \frac{\partial z_3}{\partial
x_3}+c(x_1^2+x_2^2-R^2)z_3=\widehat{q} & \mathbf{x}\in S,
\end{array}
\right.
\end{equation}
This adjoint problem has a unique solution if $\mu$ is large enough
(see proposition \ref{ExistenceRegulariteAdjoint}), therefore
$$
z_1=z_2=\widehat{v_3}=\widehat{q}\equiv 0.
$$
The fact that $\widehat{v_3}=\mathfrak{L}_\theta (v_3)$ and
$\widehat{q}=\mathfrak{L}_\theta (q)$ vanish proves points ii and
iii of the Lemma. Now let us precise the properties of functions
$v_1,v_2$. It has been proved that $\mathfrak{L}_\theta (v_1)=-v_2$
and $\mathfrak{L}_\theta (v_2)=v_1$. Therefore, applying once more
the operator $\mathfrak{L}_\theta $ yields $\mathfrak{L}_\theta
\circ \mathfrak{L}_\theta (v_1)+v_1=0$. This implies that there
exist two functions $\alpha $ and $\beta$ in the space
$H^1((0,R)\times (0,L))$, such that
$$
v_1=x_1 \alpha (r,x_3)+x_2\beta (r,x_3).
$$
Moreover, since $\mathfrak{L}_\theta (v_1)=-v_2$, we get
$$
v_2=-x_1\beta (r,x_3)+x_2\alpha (r,x_3).
$$
To finish the proof, it remains to check that the function $\beta$
is identically zero. For that purpose, let us write down the partial
differential equation satisfied by $\beta$. From the two first
equations of system (\ref{PAdjoinNS}) and the boundary condition, we
can prove that $\beta$ satisfies the following system
\begin{equation}\label{equationBeta}
\left\{\begin{array}{ll}
\displaystyle -\mu \left(\frac{\partial ^2 \beta }{\partial r^2}+
\frac{3}{r}\frac{\partial \beta }{\partial r}+
\frac{\partial ^2\beta }{\partial x_3^2}\right)-c(r^2-R^2)
\frac{\partial \beta }{\partial x_3}=0 & (r,x_3)\in (0,R)\times (0,L)\\
\displaystyle \beta (r,0)=\beta (R,x_3)=
\frac{\partial \beta }{\partial r}(0,x_3)=0 & (r,x_3)\in (0,R)\times (0,L)\\
\displaystyle \mu \frac{\partial \beta }{\partial n}+c(r^2-R^2)\beta =0 &
(r,x_3)\in (0,R)\times \{L\}\\
\end{array}\right.
\end{equation}
It remains to prove that the zero function is the unique solution of
the previous system. Multiplying the equation by $\beta$ and
integrating on the rectangle in polar coordinates gives, using the
boundary conditions
$$
    0=\mu \int_\Omega \left(\left(\frac{\partial \beta}{\partial
r}\right)^2+\left(\frac{\partial \beta}{\partial
x_3}\right)^2\right)r\d r \d x_3 + $$ $$    +\mu \int_0^L\beta
^2(0,x_3) \d x_3 +\frac{c}{2}\int_0^R (r^2-R^2)\beta ^2(r,L)r\d r.
$$
Since $c<0$ and $r<R$, we get $\frac{\partial \beta}{\partial
r}\equiv 0$ in $(0,R)\times (0,L)$ and $\beta ^2(0,x_3)=0$ for any
$x_3\in (0,L)$. Then $\beta \equiv 0$ which gives the desired
result.
\end{proof}
\subsection{The optimality condition}
We argue by contradiction. Let us assume that the cylinder $\Om$ is
optimal for the criterion $J$. We first write down the first order
optimality condition. From the explicit expression (\ref{expli}) of
$\mathbf{u}$, we have
$$
\varepsilon(\mathbf{u})=\left(\begin{array}{ccc}
0 & 0 & cx_1 \\
0 & 0 & cx_2 \\
cx_1 & cx_2 & 0
\end{array}\right).
$$
Therefore
$$
|\varepsilon(\mathbf{u})|^2=2c^2(x_1^2+x_2^2),
$$
and $|\varepsilon(\mathbf{u})|^2=2c^2R^2$ is constant on $\Gamma$.
\par Now the first order optimality condition ensures the existence
of a Lagrange multiplier $\lambda \in \R$, such that $\d J(\Omega ,
\mathbf{V})=\lambda\, \d \textrm{Vol }(\Omega ,\mathbf{V})$ for any
vector field $\mathbf{V}$. Due to the expression of the shape
derivatives of $J$ and the volume, it writes
$$
2\mu\int_\Gamma  \left(\varepsilon (\mathbf{u}):\varepsilon
(\mathbf{v})-|\varepsilon
(\mathbf{u})|^2\right)(\mathbf{V}.\mathbf{n}) \d s=\lambda
\int_\Gamma (\mathbf{V}\cdot \mathbf{n})\d s.
$$
This implies that $ \varepsilon (\mathbf{u}):\varepsilon
(\mathbf{v})$ is constant on $\Gamma$. Now, from the expression of
$\varepsilon (\mathbf{u})$ on $\Gamma$, we deduce
\begin{eqnarray*}
\varepsilon (\mathbf{u}):\varepsilon (\mathbf{v})_{\mid _\Gamma} &
= & \frac{c}{2}\left(x_1\frac{\partial v_3}{\partial x_1}+x_2
\frac{\partial v_3}{\partial x_2}+x_1\frac{\partial v_1}{\partial x_3}+
x_2\frac{\partial v_2}{\partial x_3}\right) \\
 & = & \frac{c}{2}\left(x_1\frac{\partial v_3}{\partial x_1}+x_2
 \frac{\partial v_3}{\partial x_2}\right)=\frac{cR}{2}
 \frac{\partial v_3}{\partial n}_{\mid _\Gamma},
\end{eqnarray*}
because ${v_1}_{\mid _\Gamma}={v_2}_{\mid _\Gamma}=0$. Therefore the
optimality condition writes
\begin{equation}\label{CionOptim1}
\exists \xi \in \R : \frac{\partial v_3}{\partial n}=\xi
\textnormal{ on }\Gamma .
\end{equation}

\medskip
Now, we give another useful Lemma
\begin{lemma}\label{qGamma}
If the cylinder $\Om$ is optimal and using the notations of Lemma
\ref{symetrieEDP}, we have
$$
\frac{\partial q}{\partial n}_{\mid _\Gamma}=\frac{\partial
\tilde{q}}{\partial r}_{\mid _{\{r=R\}}}=0.
$$
\end{lemma}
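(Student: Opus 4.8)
The plan is to evaluate the adjoint momentum equation of (\ref{PAdjoinNS}) on the lateral boundary $\Gamma=\{r=R\}$ and to project it onto the outward normal, which on the cylinder is purely radial, $\mathbf{n}=(x_1/R,x_2/R,0)$. Since $q=\tilde q(r,x_3)$ by Lemma \ref{symetrieEDP}, one has $\partial q/\partial n=\partial\tilde q/\partial r$ on $\Gamma$, so the two quantities in the statement coincide automatically and it suffices to prove that this common value is zero.

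First I would reduce the equation on $\Gamma$. On the cylinder the explicit solution (\ref{expli}) gives $\mathbf{u}=0$ on $\Gamma$ (because $x_1^2+x_2^2-R^2=0$ there), and the adjoint boundary condition gives $\mathbf{v}=0$ on $\Gamma$. Hence in the first line of (\ref{PAdjoinNS}) the term $\nabla\mathbf{u}\cdot\mathbf{v}$ vanishes (it is linear in $\mathbf{v}$) and $\nabla\mathbf{v}\cdot\mathbf{u}$ vanishes (it is linear in $\mathbf{u}$), leaving $\nabla q=\mu\triangle\mathbf{v}-2\mu\triangle\mathbf{u}$ on $\Gamma$. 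From (\ref{expli}) one computes $\triangle\mathbf{u}=(0,0,4c)$, whose radial component is zero, so the scalar product with $\mathbf{n}$ gives $\partial q/\partial n=\mu\,(\triangle\mathbf{v})\cdot\mathbf{n}$ on $\Gamma$. All the second derivatives used below are legitimate because, by Proposition \ref{ExistenceRegulariteAdjoint} and its proof, the adjoint solution is $C^\infty$ on the interior of $\Gamma$.

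The heart of the argument is to show $(\triangle\mathbf{v})\cdot\mathbf{n}=0$. Using $v_1=x_1 w$, $v_2=x_2 w$, $v_3=w_3$ from Lemma \ref{symetrieEDP}, a direct computation gives $\triangle v_i=x_i\big(\partial_{rr}w+\frac{3}{r}\partial_r w+\partial_{x_3x_3}w\big)$ for $i\in\{1,2\}$, hence on $\Gamma$
\[
(\triangle\mathbf{v})\cdot\mathbf{n}
=\frac{x_1^2+x_2^2}{R}\Big(\partial_{rr}w+\frac{3}{r}\partial_r w+\partial_{x_3x_3}w\Big)
=R\,\partial_{rr}w+3\,\partial_r w,
\]
where I used $w(R,x_3)=0$, which also makes the tangential derivative $\partial_{x_3x_3}w$ vanish on $\Gamma$. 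It remains to show $\partial_r w=\partial_{rr}w=0$ at $r=R$. Writing $\divgce\mathbf{v}=0$ in these variables yields $2w+r\partial_r w+\partial_{x_3}w_3=0$; evaluating on $\Gamma$, where $w=0$ and $w_3=0$, forces $\partial_r w=0$ there.

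Finally, I would differentiate the incompressibility relation $2w+r\partial_r w+\partial_{x_3}w_3=0$ with respect to $r$ and evaluate on $\Gamma$ (using $\partial_r w=0$), which gives $R\,\partial_{rr}w=-\partial_{x_3}(\partial_r w_3)$. This is where the optimality condition enters: since $\partial_r w_3=\partial v_3/\partial n$ on $\Gamma$, condition (\ref{CionOptim1}) says $\partial_r w_3\equiv\xi$ is constant along $\Gamma$, so its tangential derivative $\partial_{x_3}(\partial_r w_3)$ is zero and therefore $\partial_{rr}w=0$ at $r=R$. Combining, $(\triangle\mathbf{v})\cdot\mathbf{n}=0$, whence $\partial q/\partial n=\partial\tilde q/\partial r=0$ on $\Gamma$. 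The only delicate points are the careful bookkeeping of derivatives in the curvilinear geometry and the regularity needed to differentiate the constraint once more; the genuinely new input is the use of the over-determined optimality condition (\ref{CionOptim1}) to annihilate the term $\partial_{x_3}(\partial_r w_3)$.
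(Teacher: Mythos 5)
Your proof is correct and follows essentially the same route as the paper: you use the divergence-free relation to get $\partial_r w=0$ at $r=R$, differentiate it in $r$ and invoke the optimality condition (\ref{CionOptim1}) to kill $\partial_{x_3}\partial_r w_3$ and hence get $\partial_{rr}w=0$, and then read off $\partial\tilde q/\partial r=0$ from the radial momentum equation on $\Gamma$. The only cosmetic difference is that you project the Cartesian adjoint equation onto $\mathbf{n}$ and compute $\triangle v_i=x_i\bigl(\partial_{rr}w+\tfrac{3}{r}\partial_r w+\partial_{x_3x_3}w\bigr)$ explicitly, whereas the paper passes to the limit $r\to R$ in the already-rewritten radial system (\ref{NewPAdjoinNS}); the two are the same calculation.
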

\begin{proof}
Let us write the adjoint problem (\ref{PAdjoinNS}) in term of the
functions $w$, $w_3$ et $\tilde{q}$. We get
\begin{equation}\label{NewPAdjoinNS}
\left\{\begin{array}{ll} \displaystyle -\mu \left(\frac{\partial
^2w}{\partial r^2}+\frac{1}{r} \frac{\partial w}{\partial
r}+\frac{\partial ^2w}{\partial x_3 ^2}\right)+
\frac{1}{r}\frac{\partial \tilde{q}}{\partial r}+2cw_3-c(r^2-R^2)
\frac{\partial w}{\partial x_3} =0  &\mbox{in $\Om$}\\
\displaystyle -\mu \left(\frac{\partial ^2w_3}{\partial r^2}+
\frac{1}{r}\frac{\partial w_3}{\partial r}+
\frac{\partial ^2w_3}{\partial x_3 ^2}\right)+\frac{1}{r}
\frac{\partial \tilde{q}}{\partial x_3}-c(r^2-R^2)
\frac{\partial w_3}{\partial x_3} =-8\mu c &\mbox{in $\Om$} \\
\displaystyle 2w+r\frac{\partial w}{\partial r}+
\frac{\partial w_3}{\partial x_3} =0  &\mbox{in $\Om$}\\
w(r,0)=w_3(r,0)=w(R,x_3)=w_3(R,x_3)=0  &\\
\displaystyle \mu \left(\frac{\partial w}{\partial x_3}+
\frac{1}{r}\frac{\partial w_3}{\partial r}\right)+c(r^2-R^2)w =
4\mu c  &\mbox{on $S$}\\
\displaystyle 2\mu \frac{\partial w_3}{\partial x_3}+c(r^2-R^2)w_3
=\tilde{q} &\mbox{on $S$}.
\end{array}
\right.
\end{equation}
Since $w_{\mid _{\{r=R\}}}={w_3}_{\mid _{\{r=R\}}}=0$, we have
$\frac{\partial w}{\partial x_3}_{\mid _{\{r=R\}}}=\frac{\partial
w_3}{\partial x_3}_{\mid _{\{r=R\}}}=0$ and $\frac{\partial
^2w}{\partial x_3^2}_{\mid _{\{r=R\}}}=0$. In particular, from the
divergence-free condition, we obtain $\frac{\partial w}{\partial
r}_{\mid _{\{r=R\}}}=0$.
\par Now, let us differentiate the divergence-free condition with respect to
$r$, we get
$$
\forall (r,x_3)\in (0,R)\times (0,L), \ 3\frac{\partial w}{\partial
r}+r\frac{\partial ^2w}{\partial r^2}+\frac{\partial^2w_3}{\partial
r\partial x_3}=0.
$$
Now, $ \frac{\partial w_3}{\partial r}_{\mid _{\{r=R\}}}=\xi$ (it is
the optimality condition (\ref{CionOptim1})) ; therefore, we have $
\frac{\partial^2w_3}{\partial x_3\partial r}_{\mid _{\{r=R\}}}=0$.
Combining this last result with $\frac{\partial w}{\partial r}_{\mid
_{\{r=R\}}}=0$, it comes
$$
\frac{\partial ^2w}{\partial r^2}_{\mid _{\{r=R\}}}=0.
$$
We let $r$ going to $R$ in the first equation of problem
(\ref{NewPAdjoinNS}) and we use the previous identities to get
$$
\frac{\partial \tilde{q}}{\partial r}_{\mid _{\{r=R\}}}=0.
$$
\end{proof}
\subsection{An auxiliary function}
Using notation of Lemma \ref{symetrieEDP}, we introduce now two new
functions
\begin{itemize}
\item $w_0 : \begin{array}[t]{rcl}
[0,R]\times [0, L] & \longrightarrow & \R \\
(r,x_3) & \longmapsto & \displaystyle \int_0^{x_3}w(r,z)\d z
\end{array}$.
\item $\psi : \begin{array}[t]{rcl}
[0,R]\times [0, L] & \longrightarrow & \R \\
x_3 & \longmapsto & \displaystyle \int_0^{R}\! \! \!
\int_0^{2\pi}\left(\tilde{q}(r,x_3)-2cr^2 w_0(r,x_3)\right)\d \theta
r\d r
\end{array}$.
\end{itemize}
We will also denote by  $T_z$ the horizontal section of the cylinder
$ \left\{\mathbf{x}\in \Omega : x_3=z\right\}$. The following lemma
is the key point of the proof.
\begin{lemma}\label{psiAffine}
The function $\psi$ is affine.
\end{lemma}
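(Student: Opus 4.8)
The plan is to prove that $\psi'$ is constant, which is exactly the statement that $\psi$ is affine. Since $\p_{x_3}w_0=w$, differentiating under the integral sign gives $\psi'(x_3)=2\pi\int_0^R\bigl(\frac{\p\tilde q}{\p x_3}-2cr^2w\bigr)r\,\d r$, and the whole argument reduces to rewriting this integrand by means of the reduced adjoint system~(\ref{NewPAdjoinNS}) and checking that what survives does not depend on $x_3$.

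First I would eliminate the adjoint pressure. The equation for $w_3$ in~(\ref{NewPAdjoinNS}) expresses $\frac{\p\tilde q}{\p x_3}$ as $\mu(\p_r^2+\frac1r\p_r+\p_{x_3}^2)w_3+c(r^2-R^2)\frac{\p w_3}{\p x_3}-8\mu c$. Substituting and using $\int_0^R(\p_r^2w_3+\frac1r\p_r w_3)r\,\d r=[\,r\,\p_r w_3\,]_0^R=R\,\p_r w_3(R,x_3)$, the radial viscous contribution becomes a boundary value at $r=R$; here the optimality condition~(\ref{CionOptim1}), which gives $\p_r w_3(R,x_3)=\xi$ constant, is precisely what makes this term independent of $x_3$. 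The constant source $-8\mu c$ contributes a fixed multiple of $R^2$.

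The key cancellation comes from pairing the convective term with the explicit term $-2cr^2w$. I would use the incompressibility relation $2w+r\p_r w+\p_{x_3}w_3=0$ from~(\ref{NewPAdjoinNS}) to replace $\p_{x_3}w_3$, observe that $r(2w+r\p_r w)=\frac{\d}{\d r}(r^2w)$, and integrate by parts; this turns $\int_0^R c(r^2-R^2)\frac{\p w_3}{\p x_3}\,r\,\d r$ into $2c\int_0^R r^3 w\,\d r$, which cancels exactly the contribution of $-2cr^2w$. After these cancellations the only term that could still depend on $x_3$ is the vertical viscous one, $\mu\frac{\d^2}{\d x_3^2}\int_0^R w_3(r,x_3)\,r\,\d r$.

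The conceptual heart of the argument is then to recognise $2\pi\int_0^R w_3(r,x_3)\,r\,\d r=\int_{T_{x_3}}\mathbf{v}\cdot\mathbf{n}\,\d s$ as the flux of the adjoint velocity through the horizontal section $T_{x_3}$. Since $\mathbf{v}$ is divergence free in $\Om$ and vanishes on the lateral boundary $\Gamma$, integrating $\divgce\,\mathbf{v}=0$ over the slab between two sections and applying the divergence theorem shows this flux is the same for all $x_3$; hence $\int_0^R w_3\,r\,\d r$ is constant in $x_3$ and its second derivative vanishes. Collecting the pieces, $\psi'$ equals a constant and $\psi$ is affine. The step I expect to demand the most care is the handling of the viscous boundary terms at $r=R$ and at the axis $r=0$, together with the justification of the integrations by parts; this is where the optimality condition~(\ref{CionOptim1}) and the regularity granted by Proposition~\ref{ExistenceRegulariteAdjoint} (and Lemma~\ref{qGamma}) must be invoked.
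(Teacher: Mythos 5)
Your proof is correct, but it follows a genuinely different route from the paper's. The paper never touches the axial momentum equation directly: it takes the divergence of the vector adjoint equation to obtain a pressure equation $\triangle q+4cv_3+2c(x_1\p_1v_3+x_2\p_2v_3)-2c(x_1\p_3v_1+x_2\p_3v_2)=0$, integrates it over a slab $\{z_-\leq x_3\leq z_+\}$, and converts everything to integrals over the two end sections via Green's formula, using $\mathbf{v}=0$ on $\Gamma$ and, crucially, $\p q/\p n=0$ on $\Gamma$ from Lemma \ref{qGamma}; this yields $\psi'(z_-)=\psi'(z_+)$ without ever computing the value of $\psi'$. You instead work section by section with the reduced axisymmetric system, substituting $\p\tilde q/\p x_3$ from the $w_3$-equation, and your three reductions (the radial viscous term collapsing to $\mu R\,\p_r w_3(R,x_3)=\mu R\xi$ by (\ref{CionOptim1}), the cancellation of the convective term against $-2cr^2w$ via $r(2w+r\p_rw)=\frac{\d}{\d r}(r^2w)$, and the vanishing of $\mu\frac{\d^2}{\d x_3^2}\int_0^Rw_3\,r\,\d r$ because the flux of the divergence-free field $\mathbf{v}$ through $T_{x_3}$ is constant) are all sound; the boundary terms you worry about do behave ($r\,\p_rw_3\to0$ at the axis by smoothness of the axisymmetric $v_3$, and $(r^2-R^2)r^2w$ vanishes at both endpoints). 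Both proofs ultimately rest on the optimality of the cylinder, but yours uses (\ref{CionOptim1}) directly and bypasses Lemma \ref{qGamma} entirely, and as a bonus it delivers the explicit value $\psi'\equiv 2\pi\mu R\xi-8\pi\mu cR^2$, which (once $\xi=0$ is established) reproduces the paper's $\psi'(0)=-8\mu c\pi R^2$ and would shorten the final contradiction. One caveat: you implicitly read the $w_3$-equation of (\ref{NewPAdjoinNS}) with coefficient $1$ on $\p\tilde q/\p x_3$ rather than the printed $\frac1r$; that is the correct reading (it is forced by consistency with (\ref{2DS}) and with the third component of (\ref{PAdjoinNS})), but you should note explicitly that you are correcting this.
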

\begin{proof}
The couple $(\mathbf{v},q)$ satisfies the following p.d.e.
$$
-\mu \triangle \mathbf{v}+\nabla q +\nabla \mathbf{u}\cdot
\mathbf{v}-\nabla \mathbf{v}\cdot \mathbf{u}=-2\mu \triangle
\mathbf{u}.
$$
Let us compute the divergence of both sides of the previous
equality. Using the expression of $\mathbf{u}$ in the cylinder
$\Omega$, we obtain that $(\mathbf{v},q)$ verifies
\begin{equation}
\triangle q +4cv_3+2c \left(x_1\frac{\partial v_3}{\partial
x_1}+x_2\frac{\partial v_3}{\partial x_2}\right)-2c
\left(x_1\frac{\partial v_1}{\partial x_3}+x_2\frac{\partial
v_2}{\partial x_3}\right)=0.
\end{equation}
Let us integrate this equation on a slide
$$\omega:=\{(x_1,x_2,x_3)\in \Omega; z_- \leq x_3 \leq z_+\}$$ (we
will denote by  $e$ the inlet of $\omega$ and $s$ its outlet). We
get
$$
\int_\omega \triangle q +4c v_3\d x+2c \int_\omega
\left(x_1\frac{\partial v_3}{\partial x_1}+x_2\frac{\partial
v_3}{\partial x_2}\right)-2c  \left(x_1\frac{\partial v_1}{\partial
x_3}+x_2\frac{\partial v_2}{\partial x_3}\right)\d x =0.
$$
Now, from Green's formula, we have
$$\displaystyle \int_\omega
x_1\frac{\partial v_3}{\partial x_1}\d x =\int _{\partial \omega
}x_1v_3 n_1\d s-\int_\omega v_3 \d x= \int _{\partial \omega \cap
\Gamma }x_1v_3 n_1\d s-\int_\omega v_3 \d x=-\int_\omega v_3 \d x$$
$$ \mbox{in the same way }\; \displaystyle \int_\omega x_2
\frac{\partial v_3}{\partial x_2}\d x=-\int_\omega v_3 \d x\,.$$
Therefore
$$
4c\int_\omega v_3\d x+2c \int_\omega \left(x_1\frac{\partial
v_3}{\partial x_1}+x_2\frac{\partial v_3}{\partial x_2}\right)\d
x=0,
$$
so
\begin{equation}\label{psiPrime}
\int_\omega \triangle q \d x=2c \int_\omega \left(x_1\frac{\partial
v_1}{\partial x_3}+x_2\frac{\partial v_2}{\partial x_3}\right)\d x .
\end{equation}
Let us consider the left-hand side of (\ref{psiPrime}). From Lemma
\ref{qGamma} it comes
\begin{equation}\label{triangleQBord}
\int_\omega \triangle q \d x=\int_{s\cup e}\frac{\partial
q}{\partial n}\d s.
\end{equation}
Now, let us consider the right-hand side of (\ref{psiPrime}).
Integrating by parts yields
\begin{itemize}
\item $\displaystyle \int_\omega x_1\frac{\partial v_1}{\partial x_3}\d x
=\int_{\partial \omega}x_1v_1n_3 \d s=\int_{e\cup s}x_1v_1n_3\d s$.
\item $\displaystyle \int_\omega x_2\frac{\partial v_2}{\partial x_3}\d x
=\int_{\partial \omega}x_2v_2n_3 \d s=\int_{e\cup s}x_2v_2n_3\d s$.
\end{itemize}
Combining this result with (\ref{triangleQBord}) gives
\begin{equation}
\int_s \left(\frac{\partial q}{\partial x_3}-2c
(x_1v_1+x_2v_2)\right)\d s=\int_e \left(\frac{\partial q}{\partial
x_3}-2c (x_1v_1+x_2v_2)\right)\d s,
\end{equation}
what can also be rewritten for any $(z_-,z_+)\in (0,L)^2$ :
\begin{equation}\label{PsiPrime2}
\int_0^R\left(\frac{\partial \tilde{q}}{\partial x_3}(r,z_-)-2cr^2
w(r,z_-)\right)r\d r=\int_0^R\left(\frac{\partial
\tilde{q}}{\partial x_3}(r,z_+)-2cr^2 w(r,z_+)\right)r\d r.
\end{equation}
Now, since $\displaystyle \psi (z)=2\pi
\int_0^R\left(\tilde{q}(r,z)-2cr^2 w_0(r,z)\right)r\d r$, we have by
differentiating, for all $z$ in $[0,L],$
$$
  \psi ' (z)=2\pi \int_0^R\left(\frac{\partial \tilde{q}}{\partial
x_3}-2cr^2 \frac{\partial w_0}{\partial x_3}\right)r\d r=2\pi
\int_0^R\left(\frac{\partial \tilde{q}}{\partial x_3}-2cr^2
w\right)r\d r.
$$
Now, identity (\ref{PsiPrime2}) proves that $\psi '$ is a constant
function which gives the desired result.
\end{proof}

\medskip
We are now in position to precise the value of the constant $\xi$
appearing in the first order optimality condition
(\ref{CionOptim1}). For that purpose, we use the symmetry result
given in Lemma \ref{symetrieEDP} together with equation
(\ref{NewPAdjoinNS}). In this equation, let us integrate between
$x_3=0$ and $x_3=z \in (0,L)$. Since $w_3(r,0)=0$, we get for any
$(r,z)\in [0,R]\times [0,L]$ :
$$
2w_0(r,z)+r \frac{\partial w_0}{\partial r}(r,z)+w_3(r,z)=0.
$$
Let us differentiate this last relation with respect to $r$. This
yields
\begin{equation}\label{divgcew0}
3\frac{\partial w_0}{\partial r}+\frac{\partial ^2w_0}{\partial
r^2}+\frac{\partial w_3}{\partial r}=0.
\end{equation}
Now, in (\ref{NewPAdjoinNS}), we differentiate the divergence
equation with respect to $r$, and we make $r\to R$. We obtain
$$
\frac{\partial w}{\partial r}_{\mid _\Gamma}=\frac{\partial
^2w}{\partial r^2}_{\mid _\Gamma}=0.
$$
Letting  $r$ going to $R$ in (\ref{divgcew0}) and interverting limit
and integral gives, using the previous equality
$$
\frac{\partial v_3}{\partial n}_{\mid _\Gamma}=0.
$$
So we conclude that $\xi =0$ and the optimality condition rewrites
\begin{equation}\label{cionOptimNS}
\frac{\partial v_3}{\partial n}_{\mid _\Gamma}=0.
\end{equation}
\subsection{End of the proof}
Let us use the function $\psi$ defined above. We can rewrite it as
$$
\psi (z)=\int_{T_z} \left(\tilde{q}-2cr^2 w_0\right)\d \theta r\d r
= 2\pi \int_0^R\left(\tilde{q}(r,z)-2cr^2 w_0(r,z)\right)r\d r,
$$
where $T_z$ denotes the horizontal section of the cylinder of cote
$z$. We proved in Lemma \ref{psiAffine} that $\psi$ is affine,
therefore its derivative $\psi'$ is constant, say $\psi'(z)=a$. The
contradiction will come from the computation of this constant on the
inlet $E$ and the outlet $S$. We will see that we obtain two
different values. Let us denote by $\triangle _2$ the
two-dimensional Laplacian (with respect to the variables $x_1$ and
$x_2$).

\noindent\textit{Computation of the constant on the outlet $S$ of
the cylinder.} First of all, let us remark that if we differentiate
with respect to $x_1$ the boundary condition on $S$ satisfied by the
function $v_1$, we get
\begin{equation}\label{v1SNS}
\mu \frac{\partial ^2v_1}{\partial x_1\partial x_3}+\mu
\frac{\partial ^2v_3}{\partial
x_1^2}+2cx_1v_1+c(x_1^2+x_2^2-R^2)\frac{\partial v_1}{\partial
x_1}=4\mu c, \ \textrm{on }S.
\end{equation}
In the same way, if we differentiate with respect to $x_2$ the
boundary condition on $S$ satisfied by the function $v_2$, we get
\begin{equation}\label{v2SNS}
\mu \frac{\partial ^2v_2}{\partial x_2\partial x_3}+\mu
\frac{\partial ^2v_3}{\partial
x_2^2}+2cx_2v_2+c(x_1^2+x_2^2-R^2)\frac{\partial v_2}{\partial
x_2}=4\mu c, \ \textrm{on }S.
\end{equation}
Summing the two relations (\ref{v1SNS}) and (\ref{v2SNS}) and using
the divergence-free condition yields
$$
-\mu \frac{\partial ^2 v_3}{\partial x_3^2}+\mu \triangle
_2v_3+2c(x_1v_1+x_2v_2)-c(x_1^2+x_2^2-R^2)\frac{\partial
v_3}{\partial x_3}=8\mu c \ \textrm{on }S.
$$
Now, according to (\ref{PAdjoinNS}), $v_3$ satisfies
\begin{equation}\label{2DS}
    \mu \triangle _2v_3=8\mu c-\mu \frac{\partial ^2v_3}{\partial
x_3^2}-c(x_1^2+x_2^2-R^2)\frac{\partial v_3}{\partial
x_3}+\frac{\partial q}{\partial x_3}.
\end{equation}
Combining together the two previous equations, it comes
\begin{equation}\label{cionSupplSNS}
-2\mu \frac{\partial ^2v_3}{\partial
x_3^2}-2c(x_1^2+x_2^2-R^2)\frac{\partial v_3}{\partial
x_3}+\frac{\partial q}{\partial x_3}+2c(x_1v_1+x_2v_2)=0 \
\textrm{on }S.
\end{equation}
Now, we integrate on $S$ the equation (\ref{2DS}), we have
$$
\int_S \left(-\mu \triangle_2 v_3-\mu \frac{\partial ^2v_3}{\partial
x_3^2}-\frac{\partial v_3}{\partial
x_3}(x_1^2+x_2^2-R^2)c+\frac{\partial q}{\partial x_3}\right)\d s
=-8\mu c\int_S\d s.
$$
In the Proposition \ref{ExistenceRegulariteAdjoint}, we have seen
that $v_3$ is $C^1$ up to the boundary. Taking into account the
boundary condition on $S$, we have
$$
\displaystyle \int_S \triangle_2 v_3\d s=\int_{S\cap \Gamma}
\frac{\partial v_3}{\partial n}\d \sigma =0\,.
$$
So, the integration gives
$$
-\mu \int_S\frac{\partial ^2v_3}{\partial x_3^2}\d
s-c\int_S(x_1^2+x_2^2-R^2)\frac{\partial v_3}{\partial x_3}\d
s+\int_S\frac{\partial q}{\partial x_3}\d s =-8\mu c\pi R^2.
$$
Using (\ref{cionSupplSNS}), we can deduce that
$$
\frac{1}{2}\int_S\frac{\partial q}{\partial x_3}\d
s-c\int_S(x_1v_1+x_2v_2)\d s=-8\mu c\pi R^2.
$$
According to Lemma \ref{symetrieEDP}, one can write
$$
x_1v_1+x_2v_2=(x_1^2+x_2^2)w\left(\left(x_1^2+x_2^2\right)^{1/2},x_3\right).
$$
Therefore
\begin{equation}\label{psi'(L)}
a=\psi '(L)=-16\mu c \pi R^2
\end{equation}

\noindent\textit{Computation of the constant on the inlet $E$ of the
cylinder.} Let us first remark that $\displaystyle \frac{\partial
v_3}{\partial x_3}_{\mid _{E}}=0$ (just use the divergence-free
condition extended to $E$ and the fact that ${v_1}_{\mid
_E}={v_2}_{\mid _E}=0$). Let us now integrate the p.d.e.
(\ref{PAdjoinNS}) satisfied by $v_3$. We have, using $\frac{\partial
v_3}{\partial x_3}_{\mid _{E}}=0$,
$$
-\mu \int_E \triangle v_3 \d s+\int_E\frac{\partial q}{\partial
x_3}\d s=-8\mu c\int_E \d s.
$$
Taking into account the condition (\ref{cionOptimNS}) we get
\begin{eqnarray*}
-\mu \int_E \triangle v_3 \d s & = & -\mu \int_E \triangle _2v_3\d s
-\mu \int_E\frac{\partial ^2v_3}{\partial x_3^2}\d s \\
 & = & -\mu \int_{E\cap \Gamma}\frac{\partial v_3}{\partial n}\d \sigma +
 \mu \int_E \left(\frac{\partial ^2v_1}{\partial x_3\partial x_1}+
 \frac{\partial ^2v_2}{\partial x_3\partial x_2}\right)\d s\\
 & = & \mu \int_{E\cap \Gamma}\left(\frac{\partial v_1}{\partial x_3}n_1
 +\frac{\partial v_2}{\partial x_3}n_2\right)\d \sigma =0.
\end{eqnarray*}
Then, it follows
\begin{equation}\label{qOnE}
\int_E\frac{\partial q}{\partial x_3}\d s=-8\mu c \pi R^2.
\end{equation}
At last, since ${v_1}_{\mid _E}={v_2}_{\mid _E}=0$, we have
$$
\psi '(0)=2\pi \int_0^R\left(\frac{\partial \tilde{q}}{\partial
z}(r,0)-2cr^2 w(r,0)\right)r\d r=\int_E\frac{\partial q}{\partial
x_3}\d s.
$$
According to (\ref{qOnE}) we have
\begin{equation}\label{psi'(0)}
a=\psi '(0)=-8\mu c \pi R^2.
\end{equation}
which is clearly a contradiction with (\ref{psi'(L)}) since $c<0$.
This finishes the proof of Theorem \ref{MainTheoNSCylindre-en}.
\section{Some numerical results}\label{sec4}
In this section are presented some numerical computations. It gives
a confirmation that the cylinder is not an optimal shape for the
problem of minimizing the dissipated energy. In particular, we are
able to exhibit better shapes for this criterion. All these
computations have been realized with the software Comsol.
\par For any bounded, simply connected domain $\Om$ in $\mathbb{R}^2$ or
$\mathbb{R}^3$ and any real numbers $\mu,b$ ($b$ will be fixed in
all the algorithm), let us define the augmented Lagrangian of our
problem (\ref{PbOptimS}) by
$$
\mathcal{L}(\Omega,\mu)=J(\Omega)+\mu(|\Omega|-V)+\frac{b}{2}\,(|\Omega|-V)^2.
$$
\par Since Theorem \ref{MainTheoNSCylindre-en} ensures that the cylinder
is not optimal for the criterion $J$, the question of finding a
better shape in the class of admissible domains
$\mathcal{O}^{\varepsilon}_V$ is natural. The numerical difficulties
in such a work, are the non linear character of the state equation
and the need to take into account the volume constraint.
\par For that reason, we decompose the work in two steps. First, is
considered a gradient type algorithm in two dimensions which allows
us to reduce the criterion $J$. Then, we work in a three dimensional
class of domains with constant volume $V$ and cylindrical symmetry.
In this class, we are able to find a shape (probably not optimal)
which is better than the cylinder, see section \ref{3Dc}.
\subsection{A numerical algorithm in 2D}
We denote by $\Omega_0$ the cylinder with inlet $E$, outlet $S$, and
measure $V$. $\Omega_0$ is our initial guess for the gradient type
algorithm we consider. We deform $\Omega_0$ by using the following
method:
\begin{enumerate}
\item We fix $\mu_0\in\mathbb{R}$, $\tau>0$ and $\varepsilon>0$.
\item Iteration $m$. At the previous iteration, $\mu_m$ and $\Omega_m$
have been computed. We define
$\Omega_{m+1}:=(I+\varepsilon_m\mathbf{d_m})(\Omega_m)$, where $I$
denotes the identity operator, $\varepsilon_m$ is a real number
(step of the gradient method) which is determined through a
classical 1D optimization method and $\mathbf{d_m}$ is a vector
field of $\mathbb{R}^2$, solution of the p.d.e.
$$
\left\{\begin{array}{ll}
-\triangle \mathbf{d_m}+\mathbf{d_m}=0 & \mathbf{x}\in \Omega_m\\
\mathbf{d_m}=0 & \mathbf{x}\in E\cup S\\
\frac{\partial \mathbf{d_m}}{\partial n}=-\frac{\partial
\mathcal{L}}{\partial n}\mathbf{n} & \mathbf{x}\in \Gamma_m,
\end{array}\right.
$$
where $\Gamma_m$ denotes the lateral boundary of $\Omega_m$, i.e.
$\Gamma_m:=\partial\Omega_m\backslash(E\cup S)$. The solution of
this p.d.e. gives a descent direction for the criterion $J$ (see for
instance \cite{allaire}, \cite{dogan}).
\par Then, the Lagrange multiplier $\mu_m$ is actualized by setting
$$
\mu_{m+1}:=\mu_m+\tau(|\Omega_{m+1}|-V).
$$
\item We stop the algorithm when $(\mu_m)_{m\geq 0}$ has converged
and the derivative of the Lagrangian is small enough.
\end{enumerate}
The Figure \ref{gradientShape} shows the geometry we obtain. The
criterion has decreased about 1.1 $\%$ from the initial
configuration (a rectangle here).
\begin{figure}[!h]
\begin{center}
\includegraphics[width=7.5cm,height=6cm]{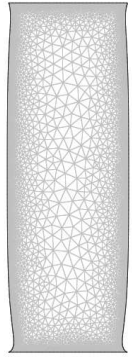}
\caption{Final 2-D shape obtained by the gradient
algorithm}\label{gradientShape}
\end{center}
\end{figure}
\subsection{Some 3D computations}\label{3Dc}
In this section, we create a family of 2D shapes, constructed with
cubic spline curves which look like the presumed optimum obtained in
figure \ref{gradientShape}. Then, we obtain a family of 3D domains
of volume $V$, by revolving the previous 2D shapes around the
$(Ox_3)$ axis. We introduce a small parameter $e$ in the control
points of the cubic splines and we evaluate for each value of $e$
the criterion $J$. The value $e=0$ corresponds to the cylinder. Let
us respectively denote by $J(e)$ and $J(\Omega_0)$ the values of the
criterion $J$ evaluated at the domain corresponding to value $e$ of
the parameter and at the cylinder. Figure \ref{ShapeVol} is the plot
of function $e\mapsto 100.\frac{J(e)-J(\Omega_0)}{J(\Omega_0)}$
above, and Figure \ref{ShapeVol2} represents a better shape than the
cylinder for the criterion $J$ which is obtained with a value of the
parameter $e\simeq 0.001$. It shows that this simple method provides
a 3D (axially symmetric) shape which is slightly better than the
cylinder.
\begin{figure}[!h]
\begin{center}
\includegraphics[width=8.5cm,height=7cm]{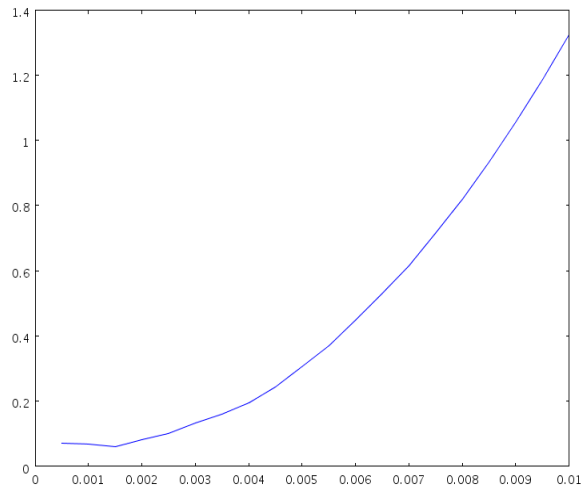}
\caption{The cost function (which slightly decreases before
increasing)}\label{ShapeVol}
\end{center}
\end{figure}
\begin{figure}[!h]
\begin{center}
\includegraphics[width=8.5cm,height=7cm]{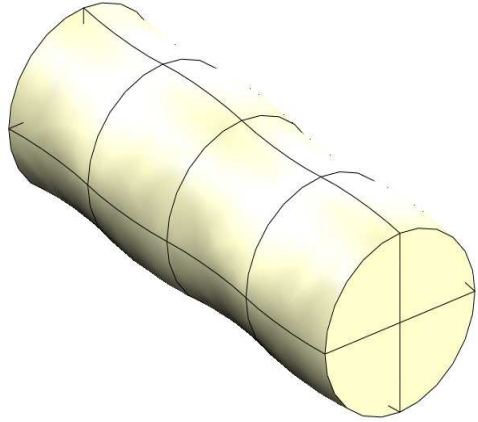}
\caption{A 3D (axially symmetric) shape which is better than the
cylinder}\label{ShapeVol2}
\end{center}
\end{figure}

\end{document}